\pdfoutput=1
\documentclass[11pt,reqno]{amsart}
\usepackage[T1]{fontenc}
\usepackage[utf8]{inputenc}
\usepackage{lmodern}
\usepackage{amsmath,amssymb,mathtools}
\usepackage[margin=1.05in]{geometry}
\usepackage{microtype}
\usepackage{booktabs,array,longtable}
\usepackage{enumitem}
\usepackage{url}
\usepackage[hidelinks]{hyperref}
\hypersetup{pdftitle={The Burr-Erdos-Graham-Sos conjecture for the seven-cycle},pdfauthor={Asad Shahab},pdfsubject={Erdos Problem 809; rainbow odd cycles}}
\numberwithin{equation}{section}
\newtheorem{theorem}{Theorem}[section]
\newtheorem{lemma}[theorem]{Lemma}
\newtheorem{proposition}[theorem]{Proposition}
\newtheorem{corollary}[theorem]{Corollary}
\theoremstyle{definition}
\newtheorem{definition}[theorem]{Definition}
\theoremstyle{remark}

\newcommand{\E}{\mathbb E}
\newcommand{\Prob}{\mathbb P}
\newcommand{\one}{\mathbf 1}
\newcommand{\MC}{\operatorname{mc}}

\newcommand{\cL}{\mathcal L}
\newcommand{\set}[1]{\{#1\}}
\newcommand{\abs}[1]{\lvert#1\rvert}

\newcommand{\ind}[1]{\one_{\{#1\}}}

\newcommand{\code}[1]{\texttt{\detokenize{#1}}}
\setlist[enumerate]{label=\textup{(\roman*)},leftmargin=2em,itemsep=2pt,topsep=4pt}
\allowdisplaybreaks[1]
\title[The Burr--Erd\H{o}s--Graham--S\'os conjecture for the seven-cycle]{The Burr--Erd\H{o}s--Graham--S\'os conjecture for the seven-cycle}
\author[The Burr--Erd\H{o}s--Graham--S\'os conjecture for the seven-cycle]{Asad Shahab}
\date{September 26, 2026}
\subjclass[2020]{05C35, 05C15, 05D10}
\keywords{Maximal anti-Ramsey number, rainbow cycle, fractional coloring, flag algebras, exact certificate, formal verification}
\begin{document}
\begin{abstract}
For a graph $H$, let $f(n,e,H)$ be the least number of colors in an edge-coloring of some $n$-vertex graph with at least $e$ edges in which every copy of $H$ is rainbow. Burr, Erd\H{o}s, Graham, and S\'os conjectured that $f(n,\lfloor n^2/4\rfloor+1,C_{2k+1})=(1/8+o(1))n^2$ for every fixed $k\ge3$, and Buci\'c, Chen, and Ma recently proved this for all $k\ge4$. We prove the remaining case $k=3$:
\[
 f\left(n,\left\lfloor n^2/4\right\rfloor+1,C_7\right)
 =\left(\frac18+o(1)\right)n^2.
\]
The lower bound rests on a weighted palette inequality, which we prove with an exact rational certificate on five sampled vertices. Its main ingredients are a fractional matching of compatible triangular edges and private resources attached to nontriangular edges. A stable form of the inequality, combined with regularity, triangle removal, and a direct argument for graphs close to bipartite, transfers the bound to arbitrary edge-colorings. We also describe a Lean~4 formalization of the conjecture for every fixed $k\ge3$, which combines the new seven-cycle proof with a formalization of the Buci\'c--Chen--Ma argument for $k\ge4$.
\end{abstract}
\maketitle
\section{Introduction}\label{sec:introduction}
A subgraph of an edge-colored graph is \emph{rainbow} if its edges have pairwise distinct colors. Burr, Erd\H{o}s, Graham, and S\'os~\cite{BEGS1989} asked how few colors are needed to make every copy of a fixed graph $H$ rainbow. For a finite simple graph $G$, let $r_H(G)$ denote this minimum and, following Buci\'c, Chen, and Ma~\cite{BCM2026}, define
\begin{equation}\label{eq:fdefinition}
 f(n,e,H)=\min\set{r_H(G):\abs{V(G)}=n,\ e(G)\ge e},
\end{equation}
whenever the family of graphs is nonempty. Copies of $H$ are subgraphs, not necessarily induced; thus a copy of a cycle has distinct vertices but may have chords in $G$. Colorings need not be proper, and only colors that actually occur on an edge are counted. In~\cite{BEGS1989} the same quantity is denoted $\chi_S(n,e,H)$ and the host is required to have exactly $e$ edges. The two definitions agree, because deleting edges preserves the rainbow property and cannot increase the number of colors.

For odd cycles the natural threshold is $e=\lfloor n^2/4\rfloor+1$, one more than the Tur\'an number of $C_{2k+1}$ for large $n$, and there the answer depends strongly on the length of the cycle. For triangles the value is $3$, and for $C_5$ it is $\lfloor n/2\rfloor+3$ for large $n$, a result of Erd\H{o}s and Simonovits (see~\cite{BEGS1989,BCM2026}). For longer cycles, Burr, Erd\H{o}s, Graham, and S\'os proved that $f(n,\lfloor n^2/4\rfloor+1,C_{2k+1})$ is at least a positive multiple of $n^2$ for every fixed $k\ge3$~\cite[Theorem~5.1]{BEGS1989}. They conjectured that the correct constant is $1/8$, the value given by two disjoint cliques of nearly equal size. This conjecture is Erd\H{o}s Problem~\#809. Buci\'c, Chen, and Ma~\cite{BCM2026} proved it for every $k\ge4$. We settle the remaining case $k=3$.

\begin{theorem}\label{thm:main}
As $n\to\infty$ through all positive integers,
\begin{equation}\label{eq:main}
 f\left(n,\left\lfloor n^2/4\right\rfloor+1,C_7\right)
 =\left(\frac18+o(1)\right)n^2.
\end{equation}
\end{theorem}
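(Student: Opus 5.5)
The proof has two halves. The upper bound is a construction. The lower bound needs a weighted palette inequality, which we would prove on five sampled vertices and then transfer to arbitrary colorings by stability.

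\emph{Upper bound.} Take two disjoint cliques on $\lceil n/2\rceil$ and $\lfloor n/2\rfloor$ vertices and add one edge $uv$ between them. Each clique has about $n^2/8$ edges, so for large $n$ the total is at least $\lfloor n^2/4\rfloor+1$. Any copy of $C_7$ uses the bridge $uv$ either zero times or twice, and a cycle cannot traverse a bridge twice, so every $C_7$ lies inside one clique. Color every edge of each clique with its own color, reusing the same palette in both cliques (a $C_7$ never meets both), and give $uv$ any color. This uses $(1/8+o(1))n^2$ colors.

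\emph{Lower bound.} Let $G$ have $\lfloor n^2/4\rfloor+1$ edges and a coloring in which every $C_7$ is rainbow. Call two edges \emph{conflicting} if they lie together on a common $C_7$; conflicting edges must receive distinct colors. It therefore suffices to lower bound the number of color classes, each of which is a set of pairwise nonconflicting edges. The plan is to assign each color class a total weight of at most $1$ while the edges carry total weight about $n^2/8$.

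\begin{enumerate}
\item \emph{Structure of color classes.} In a graph this dense with many triangles, two edges sharing a color must be far apart in a strong sense. In particular, a triangular edge (one lying in a triangle) can share its color only with a small, structured set of compatible edges.
\item \emph{Weighting.} Distribute weight through a fractional matching of compatible triangular edges, and give each nontriangular edge a private resource. The target is that the total weight is at least $(1/8-o(1))n^2$.
\item \emph{Local verification.} Reduce the global inequality to a flag-algebra-type inequality on densities of $5$-vertex colored configurations. Prove it with an exact rational certificate (a sum-of-squares combination). Here the extremal constructions are the two disjoint cliques, which attain $1/8$, and the near-bipartite graph with few colors.
\item \emph{Stable form.} Prove a stable version of that inequality: if the bound is nearly tight, then $G$ is close to two cliques or close to bipartite.
\item \emph{Transfer to arbitrary colorings.} Use the regularity lemma and triangle removal to pass from the local inequality to arbitrary $G$. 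In the near-bipartite case, having $e>n^2/4$ forces a dense supply of $C_7$'s through the extra edges. These cycles force quadratically many colors directly, and we would count this by choosing long paths in the bipartite part so that the conflict graph has small independence number relative to the edge count.
\end{enumerate}

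The main obstacle is finding a weighting that is simultaneously locally certifiable and tight at $1/8$. For $C_7$, unlike $k\ge4$, paths of length $5$ in the conflicts are short, so edges in distinct dense parts can share colors more freely and the Buci\'c--Chen--Ma embedding arguments fail. I would first try an LP search for the weighting, solved numerically over all $5$-vertex types and then rounded to rationals. If the certificate has slack only at the extremal configurations, the stability step becomes feasible.
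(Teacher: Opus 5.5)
Your upper bound fails as written. $\binom{\lceil n/2\rceil}{2}+\binom{\lfloor n/2\rfloor}{2}+1=\lfloor n^2/4\rfloor-\lfloor n/2\rfloor+1$, so the balanced two-clique graph with a bridge misses the threshold by $\lfloor n/2\rfloor$ edges. The threshold is exact to one edge, so ``about $n^2/8$ each'' is not enough. Unbalance the cliques instead: with $a=\lceil n/2+\sqrt n\rceil$ and $b=n-a$ one gets $\binom a2+\binom b2=n^2/4+(a-n/2)^2-n/2\ge n^2/4+n/2$. Then $\binom a2=n^2/8+O(n^{3/2})$ colors suffice, and no bridge is needed.

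For the lower bound, your ingredients match the paper's, but the proposal omits the reduction that makes them fit, and step~1 is false without it. The paper's sharp form of step~1 says that a pattern containing a triangular edge has at most two edges and never mixes with nontriangular edges. This is only available when the weighted minimum degree exceeds $1/3$, so that no three neighborhoods are pairwise disjoint. Consider the eligible graph made of $K_a$ with $a\approx0.72n$ together with disjoint triangles. There, one edge from every component can share a color.

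The paper reaches the minimum-degree regime by repeatedly deleting vertices of degree $<(1/3+\gamma)N+1$. This keeps $e>N^2/4$, but it can shrink the order by a constant factor. So a target of the form $(1/8-o(1))h^2$ for the terminal graph is too weak; take $K_{0.72n}$ plus isolated vertices. What survives peeling is a linear potential. The certificate is arranged to give $\Phi\ge\frac32m-\frac14$, i.e.\ $r\ge\frac32e(H)-\frac{h^2}{4}-o(n^2)$. Each deletion lowers $\frac32e-\frac{N^2}{4}$ by less than $\frac32\gamma N+\frac74$, because the slope $3/2$ is tuned exactly to the degree threshold $1/3$.

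Your stability step also aims at the wrong target. The problem is not the structure of near-extremal graphs. It is that cleanup deletes $\eta n^2$ edges while the surplus over $n^2/4$ may be a single edge, and the palette bound fails at $m=1/4$: a balanced complete bipartite host has $\Phi=0$. What is needed is robustness in the density. The paper proves $\Phi\ge\frac32m-\frac14-2\xi/\rho$ whenever $m\ge\frac14-\xi$ and the cut deficit is at least $\rho$, using $\Delta\ge\rho/4$ and the bound $a_*<1$ on the density multipliers. When the deficit is below $\rho$, it exhibits a conflict-graph clique of $(1/8-\varepsilon)n^2$ crossing edges, routed through one well-chosen internal edge.

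Finally, regularity and triangle removal are not used to transfer the inequality directly. They lift walks of lengths $2,3,5$ in the cleaned graph to simple paths of $G$ avoiding five prescribed vertices, so that color classes become walk-compatible patterns. It is this walk formulation that yields the root-budget and union constraints the certificate relies on.
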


The upper bound comes from the two-clique construction. The new content is the lower bound: for every $\varepsilon>0$ and all sufficiently large $n$, every $n$-vertex graph with at least $\lfloor n^2/4\rfloor+1$ edges needs at least $(1/8-\varepsilon)n^2$ colors in any edge-coloring in which every $C_7$ is rainbow.

\begin{corollary}\label{cor:allodd}
For every fixed integer $k\ge3$,
\begin{equation}\label{eq:allodd}
 f\left(n,\left\lfloor n^2/4\right\rfloor+1,C_{2k+1}\right)
 =\left(\frac18+o(1)\right)n^2.
\end{equation}
The case $k=3$ is Theorem~\ref{thm:main}; the cases $k\ge4$ are due to Buci\'c, Chen, and Ma.
\end{corollary}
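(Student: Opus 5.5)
The corollary is a case split on $k$, so the proof will be a short assembly of results already in hand rather than a new argument. For $k=3$ the statement is exactly \eqref{eq:main}, which is Theorem~\ref{thm:main}, and nothing more is needed. For each fixed $k\ge4$ we invoke the theorem of Buci\'c, Chen, and Ma~\cite{BCM2026}. It gives $f(n,\lfloor n^2/4\rfloor+1,C_{2k+1})=(1/8+o(1))n^2$ with the same definition \eqref{eq:fdefinition} of $f$, since we adopted their convention. The only care required is to check that the two conventions agree: copies of $C_{2k+1}$ are taken as not necessarily induced subgraphs, colorings need not be proper, and the host graph has at least, rather than exactly, $e$ edges. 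The last point was already reconciled in the introduction by noting that deleting edges preserves the rainbow property.

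For completeness we also state the upper bound uniformly in $k$, since it is elementary. Take two disjoint cliques on $\lceil n/2\rceil$ and $\lfloor n/2\rfloor$ vertices, and add $O(n)$ further edges between the cliques so that the edge count reaches $\lfloor n^2/4\rfloor+1$. The missing count is about $n/2$, so for instance a matching across the cliques suffices. Color every edge with its own color except that the edges inside each clique all get distinct colors, which uses about $\binom{n/2}{2}\cdot 2+O(n)=(1/4+o(1))n^2$ colors.

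This naive count is too large, so the standard construction must be used instead. The edges inside the two cliques are colored so that each color class is a pair of edges, one in each clique, matched by a fixed bijection between the cliques. Any odd cycle of length at most a fixed constant must use a cross edge. With only $O(n)$ sparse cross edges, spread so that no short cycle crosses twice in a bad way, one checks that no $C_{2k+1}$ can contain both edges of a paired color class. This yields $(1/8+o(1))n^2$ colors.

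Since this upper bound is part of both cited results, we simply quote it. The main (and essentially only) step needing attention is therefore matching definitions with~\cite{BCM2026}; there is no mathematical obstacle beyond Theorem~\ref{thm:main} itself.
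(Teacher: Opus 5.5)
Your case split is the paper's proof: $k=3$ is Theorem~\ref{thm:main}, and $k\ge4$ is quoted from \cite{BCM2026}. The paper is more specific about the citation. It applies \cite[Theorem~1.2]{BCM2026}, which is stated for general $e$, at $e=\lfloor n^2/4\rfloor+1$, and notes that its square-root error term is then $O(n)=o(n^2)$. That is the one detail your citation step should add; the definitions need no separate comparison, since the paper already adopts Buci\'c--Chen--Ma's.

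Your ``for completeness'' upper-bound paragraphs, however, are wrong and should be deleted or replaced.

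The edge count already fails. With $p=\lfloor n/2\rfloor$, the cliques on $\lceil n/2\rceil$ and $\lfloor n/2\rfloor$ vertices have exactly $p+1$ edges fewer than $\lfloor n^2/4\rfloor+1$, while a matching between them has at most $p$ edges.

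More seriously, the paired coloring is not valid once two disjoint cross edges $x_1y_1,x_2y_2$ are present. Suppose a color class consists of $ab$ in the first clique and $cy_1$ in the second, with $a,b\notin\{x_1,x_2\}$ and $c\notin\{y_1,y_2\}$. Then $x_1\,a\,b\,x_2\,y_2\,c\,y_1\,x_1$ is a $C_7$ through both edges. Lengthening the path inside the first clique does the same for every $C_{2k+1}$ with $k\ge3$. With a near-perfect matching, essentially every paired class is destroyed this way, so the construction collapses.

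Your claim that every short odd cycle must use a cross edge is also false, since each clique contains $C_{2k+1}$. Those cycles are harmless only because each color has at most one edge per clique.

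The construction in the paper has no cross edges at all. Take $K_a\sqcup K_b$ with $a=\lceil n/2+\sqrt n\rceil$ and $b=n-a$; this graph already has at least $n^2/4+n/2$ edges. Every cycle lies in one component, so reusing the colors of $K_a$ on $K_b$ is valid for every $k$. It uses $\binom a2=n^2/8+O(n^{3/2})$ colors.
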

\begin{proof}
For $k\ge4$, apply~\cite[Theorem~1.2]{BCM2026} at $e=\lfloor n^2/4\rfloor+1$; its square-root term is $O(n)$. The remaining case is Theorem~\ref{thm:main}.
\end{proof}

\subsection{Outline of the proof}
The \emph{conflict graph} of $G$ has vertex set $E(G)$, two edges being adjacent when they lie on a common simple $C_7$; its chromatic number is $r_{C_7}(G)$. Rather than work with this graph directly, we pass to a vertex-weighted graph and replace common seven-cycles by a condition on walks: two edges are \emph{compatible} if their endpoints are not joined by complementary walks of lengths two and three. After a small number of edges have been deleted, a path-lifting lemma turns such walks into simple paths of the original graph, and the original color classes then satisfy the compatibility relation.

An edge is \emph{triangular} if it lies in a triangle. Once the weighted minimum degree exceeds $1/3$, a set of pairwise compatible edges is either entirely triangular or entirely nontriangular, and in the first case it has at most two edges. The saving available in the triangular part is therefore the value $\nu$ of a capacitated fractional matching. In the nontriangular part, the geometry of neighborhoods gives a family of lower bounds $q(w)$ on the palette cost. The heart of the argument is the inequality
\[
 2\sum_wx_w\tau(w)q(w)
 +\left(\frac12-m-2f-2\nu\right)\sum_wx_w\tau(w)\ge0,
\]
where $m$ is the total edge mass, $f$ is the nontriangular edge mass, and $\tau(w)$ is the edge mass inside $N(w)$. We prove it with an exact rational certificate on five sampled vertices. The certificate uses two constraints supplied by the triangular matching: a bound on endpoint incidences in each neighborhood, and a bound on the union of the neighborhoods of each matched edge.

The resulting palette bound has a stable form that tolerates $m$ slightly below $1/4$. Some such stability is necessary. Path lifting relies on regularity and triangle removal and therefore deletes edges, while the surplus of the original graph over $n^2/4$ may be a single edge. When every bipartite cut misses many edges, a lower bound on the triangle mass pays for the deleted edges. When some cut contains almost all edges, we find a large clique in the conflict graph directly. Finally, a vertex-deletion argument removes the minimum-degree assumption while keeping the number of edges strictly above $h^2/4$, where $h$ is the current order.

Sections~\ref{sec:palettes}--\ref{sec:finite} prove the finite palette inequality. Sections~\ref{sec:cleanup}--\ref{sec:completion} transfer it to arbitrary graphs and complete the proof of Theorem~\ref{thm:main}. Section~\ref{sec:verification} describes the Lean formalization of Corollary~\ref{cor:allodd}, which combines the new $C_7$ proof with a formalization of the Buci\'c--Chen--Ma argument for $k\ge4$.

\section{Weighted palettes and private resources}\label{sec:palettes}
\subsection{Compatibility}
Throughout this section, $Q$ is a finite simple graph with positive vertex weights $(x_v)_{v\in V(Q)}$ satisfying $\sum_vx_v=1$. For $U\subseteq V(Q)$ and $A\subseteq E(Q)$, write
\[
 x(U)=\sum_{v\in U}x_v,\qquad c(uv)=x_ux_v,\qquad
 c(A)=\sum_{e\in A}c(e).
\]
Put $D(v)=x(N(v))$, $\delta=\min_vD(v)$, and $m=c(E(Q))$. Neighborhoods are open, and walks and neighborhoods are always taken in $Q$.

\begin{definition}\label{def:compatibility}
Two distinct edge types $ab,cd\in E(Q)$ are \emph{compatible} if
\begin{equation}\label{eq:compatibility}
 (A_Q^2)_{ac}(A_Q^3)_{bd}=0
\end{equation}
for every choice of orientations $(a,b)$ and $(c,d)$ of the two edges. Here $A_Q$ is the adjacency matrix, so its powers count walks; walks may repeat vertices and may be closed. A \emph{pattern} is a nonempty pairwise compatible set of edge types.
\end{definition}
Compatibility is defined through walks rather than simple cycles. Section~\ref{sec:source} shows that, after the cleanup of Section~\ref{sec:cleanup}, the original color classes give patterns in the retained graph.

Let $T$ be the set of edges of $Q$ that lie in a triangle, and put $F=E(Q)\setminus T$. Let $Z$ be the set of vertices lying in no triangle, and define
\begin{equation}\label{eq:sectors}
 S=E(Q[Z]),\qquad J=F\setminus S,\qquad f=c(F).
\end{equation}
Thus $S\subseteq F$, and $f$ includes the mass of $S$.
For $A\subseteq E(Q)$, an \emph{exact fractional pattern cover} of $A$ is a collection of amounts $a_p\ge0$, indexed by patterns contained in $A$, with
\begin{equation}\label{eq:exactcover}
 \sum_{p\ni e}a_p=c(e)\quad(e\in A).
\end{equation}
Its cost is $\sum_pa_p$. Write $\Phi$ for the minimum cost of a cover of $E(Q)\setminus S$ and $\Phi_F$ for the minimum cost of a cover of $J$. The cover by singletons is feasible and there are finitely many patterns, so both minima are attained. Compatibility is always computed in $Q$, even when the covered set is smaller.

Lower bounds on covers come from weak duality. If $y_e\ge0$ and
\begin{equation}\label{eq:wholepattern}
 \sum_{e\in p}y_e\le1\quad\text{for every pattern }p\subseteq A,
\end{equation}
then every exact cover of $A$ has cost at least $\sum_{e\in A}c(e)y_e$. To see this, multiply~\eqref{eq:exactcover} by $y_e$ and interchange the two finite sums.

\subsection{Neighborhood geometry}
Both sectors are controlled by the following lemma.
\begin{lemma}\label{lem:cross}
For compatible edges $uv$ and $ab$, the nonempty intersections between the two indexed pairs
\[
 \{N(u),N(v)\}\quad\text{and}\quad\{N(a),N(b)\}
\]
form a partial matching: each member of either pair meets at most one member of the other. If this matching is perfect and the edges are oriented so that $N(u)\cap N(a)$ and $N(v)\cap N(b)$ are nonempty, then the crossed intersections are empty and the aligned pairs $N(u),N(a)$ and $N(v),N(b)$ are anticomplete.
\end{lemma}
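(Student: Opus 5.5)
The plan is to unpack Definition~\ref{def:compatibility} into a statement about neighborhoods and then get everything from one elementary observation. For vertices $p,q$, $(A_Q^2)_{pq}>0$ holds exactly when $N(p)\cap N(q)\neq\emptyset$. Also $(A_Q^3)_{pq}>0$ holds exactly when some $y\in N(p)$ and $z\in N(q)$ are adjacent, since a $3$-walk is $p,y,z,q$. In other words, $(A_Q^3)_{pq}=0$ says precisely that $N(p)$ and $N(q)$ are anticomplete. Compatibility of $uv$ and $ab$, quantified over all four orientation choices, therefore becomes a single rule, stated here for the pairing $u\leftrightarrow a$, $v\leftrightarrow b$ and applying equally to the other three orientation choices:
\[
 N(u)\cap N(a)\neq\emptyset\ \Longrightarrow\ N(v),N(b)\text{ anticomplete}.
\]
Listing all orientations gives the rule for $N(v)\cap N(b)$, and for the crossed intersections $N(u)\cap N(b)$ and $N(v)\cap N(a)$.

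The key observation is that a nonempty crossed intersection destroys anticompleteness. Suppose $w\in N(u)\cap N(b)$. Then $u\in N(v)$, $w\in N(b)$ and $uw\in E(Q)$, so $N(v)$ and $N(b)$ are not anticomplete. The same argument gives three more implications, one for each other nonempty intersection; for example, $w\in N(v)\cap N(a)$ makes $N(u)$ and $N(a)$ not anticomplete via the edge $vw$ with $v\in N(u)$.

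For the partial-matching claim, suppose some member of one pair meets both members of the other, say $N(u)$ meets both $N(a)$ and $N(b)$. From $N(u)\cap N(a)\neq\emptyset$, the compatibility rule makes $N(v),N(b)$ anticomplete. From $N(u)\cap N(b)\neq\emptyset$, the observation above makes $N(v),N(b)$ not anticomplete. This is a contradiction. The cases where $N(v)$ meets both, or where $N(a)$ or $N(b)$ meets both of $N(u),N(v)$, follow by the symmetric argument: rename $u\leftrightarrow v$, or swap the roles of the two edges, since Definition~\ref{def:compatibility} is symmetric in the two edges.

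For the second assertion, assume the matching is perfect and orient so that $N(u)\cap N(a)$ and $N(v)\cap N(b)$ are nonempty. The partial-matching property forbids $N(u)$ from also meeting $N(b)$, and $N(v)$ from meeting $N(a)$, so the crossed intersections are empty. Applying the compatibility rule to $N(u)\cap N(a)\neq\emptyset$ makes $N(v),N(b)$ anticomplete. Applying it with both orientations reversed, to $N(v)\cap N(b)\neq\emptyset$, makes $N(u),N(a)$ anticomplete.

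The only real work is the bookkeeping of orientations and keeping track of which symmetries are legitimate. I expect no genuine obstacle; the one point to check carefully is that walks of length two and three are exactly captured by common neighbors and by edges between neighborhoods, which holds even though walks may repeat vertices.
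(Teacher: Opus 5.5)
Your proof is correct and follows the paper's own argument. A member meeting both members of the other pair is ruled out by combining the two-walk from $u$ to $a$ with the three-walk $v\,u\,w\,b$, and the perfect case is read directly off the compatibility condition, restated as anticompleteness of the aligned neighborhoods.
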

\begin{proof}
If $N(u)$ meets both $N(a)$ and $N(b)$, there is a two-walk from $u$ to $a$, and for $z\in N(u)\cap N(b)$ the walk $vuzb$ has length three. These complementary walks are forbidden by~\eqref{eq:compatibility}. Relabeling the endpoints and exchanging the two edges gives the remaining row and column restrictions. In the perfect case, a two-walk from $u$ to $a$ excludes every three-walk from $v$ to $b$; equivalently, there is no edge between $N(v)$ and $N(b)$. The other aligned pair is treated in the same way.
\end{proof}
Neighborhoods are indexed by endpoints, so they are counted separately even when two endpoints coincide. We shall use repeatedly that when $\delta>1/3$ no three vertex neighborhoods are pairwise disjoint, since their weights would sum to more than one.

\begin{lemma}[Sector separation]\label{lem:sectors}
Suppose $\delta>1/3$. No compatible pair mixes $T$ and $F$, and every compatible triple consists entirely of edges of $F$.
\end{lemma}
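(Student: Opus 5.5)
The plan is to derive everything from Lemma~\ref{lem:cross} together with the observation that, when $\delta>1/3$, no three indexed vertex neighborhoods are pairwise disjoint. An edge $ab$ lies in $F$ exactly when $N(a)\cap N(b)=\emptyset$. An edge $uv$ lies in $T$ exactly when some $w\in N(u)\cap N(v)$ exists.

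\emph{Mixed pairs.} Suppose $uv\in T$ with $w\in N(u)\cap N(v)$, and $ab\in F$, and that the two edges are compatible. Since $N(a)\cap N(b)=\emptyset$, the triple $N(u),N(a),N(b)$ cannot be pairwise disjoint, so $N(u)$ meets $N(a)$ or $N(b)$. The same holds for $N(v)$. By the partial-matching part of Lemma~\ref{lem:cross}, each of $N(u),N(v)$ then meets exactly one of $N(a),N(b)$, and these two sets are different. Hence the matching is perfect, and after relabelling $N(u)\cap N(a)\neq\emptyset\neq N(v)\cap N(b)$. The second part of Lemma~\ref{lem:cross} now says that $N(u),N(a)$ are anticomplete and $N(v),N(b)$ are anticomplete.

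The key trick is to use the apex $w$. Since $w\in N(v)$, the anticompleteness of $N(v),N(b)$ gives $N(w)\cap N(b)=\emptyset$. Since $w\in N(u)$, the anticompleteness of $N(u),N(a)$ gives $N(w)\cap N(a)=\emptyset$. Together with $N(a)\cap N(b)=\emptyset$, the sets $N(w),N(a),N(b)$ are pairwise disjoint, which contradicts $\delta>1/3$. So no compatible pair mixes $T$ and $F$.

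\emph{Triples.} By the first part, any compatible triple that contains a $T$-edge lies entirely in $T$. It therefore suffices to show that no three triangular edges are pairwise compatible. My intended route is to first prove a lemma for a compatible pair $uv,ab\in T$, with apexes $w$ and $c$: all four cross intersections $N(u)\cap N(a)$, $N(u)\cap N(b)$, $N(v)\cap N(a)$ and $N(v)\cap N(b)$ are empty, or at least some endpoint neighborhood of one edge is disjoint from both endpoint neighborhoods of the other.

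Granting a suitably strong version of this, take three pairwise compatible triangular edges and pick one endpoint from each so that the three chosen neighborhoods are pairwise disjoint. This contradicts $\delta>1/3$. If only the weaker version is available, I would substitute the apexes $w$ and $c$ for endpoints, exactly as in the mixed case.

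To prove the pair lemma, suppose $z\in N(u)\cap N(a)$. Lemma~\ref{lem:cross} then forces $N(u)\cap N(b)=\emptyset$ and $N(v)\cap N(a)=\emptyset$. I would then look for a three-walk from $v$ to $b$, which is forbidden by the two-walk $u z a$. Natural candidates run through the triangles: for instance $v\,u\,y\,b$ with $y\in N(u)\cap N(b)$, or walks passing through $w$ or $c$. If no such walk exists, I would instead exhibit three pairwise disjoint neighborhoods among $N(v)$, $N(b)$, $N(w)$ and $N(c)$.

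I expect this pair lemma for triangular edges to be the main obstacle. Unlike the mixed case, the condition $N(a)\cap N(b)\neq\emptyset$ no longer forces the matching to be perfect, so the anticompleteness conclusion of Lemma~\ref{lem:cross} is not automatically available. A short case analysis on which cross intersections are nonempty will be needed, using the triangles $uvw$ and $abc$ to manufacture the forbidden walks in each case.
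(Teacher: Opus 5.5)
\paragraph{Mixed pairs and the reduction.} Your argument for mixed pairs is correct and is the paper's own. You force a perfect matching via Lemma~\ref{lem:cross} and the three-disjoint-neighborhoods obstruction, then use the apex $w$ to make $N(w),N(a),N(b)$ pairwise disjoint. Your reduction of the triple statement to triples of triangular edges is also valid.

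\paragraph{The gap: the pair lemma is false.} The gap is in the triangular case. The pair lemma you plan to prove is false, even in its weaker form, so no case analysis on a single compatible pair can give a contradiction. This should be expected, since compatible triangular pairs do occur; they are what the matching in Proposition~\ref{prop:decomposition} is built from. Here is a counterexample.
\begin{itemize}
\item Vertices and weights: $u,v,a,b$ have weight $1/20$; $W$ and $C$ are triangles on vertices of weight $1/20$; $z,z'$ have weight $3/20$; $y$ has weight $1/5$.
\item Edges: join $u,v$ to each other and to all of $W$; join $a,b$ to each other and to all of $C$; add $zu,za,z'v,z'b,zz'$; join $y$ to $W\cup C\cup\{z,z'\}$.
\end{itemize}
Then $\delta=7/20>1/3$, and both $uv$ and $ab$ are triangular. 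The neighborhoods are
\[
N(u)=W\cup\{v,z\},\quad N(v)=W\cup\{u,z'\},\quad N(a)=C\cup\{b,z\},\quad N(b)=C\cup\{a,z'\}.
\]
The edges are compatible: the only two-walks between endpoints are $uza$ and $vz'b$, and no edge joins $N(u)$ to $N(a)$, or $N(v)$ to $N(b)$. Yet $N(u)\cap N(a)=\{z\}$ and $N(v)\cap N(b)=\{z'\}$. So every endpoint neighborhood of one edge meets an endpoint neighborhood of the other. Your two fallback tools also fail here:
\begin{itemize}
\item there is no three-walk from $v$ to $b$;
\item no three of $N(v),N(b),N(w),N(c)$ are pairwise disjoint, because $u\in N(v)\cap N(w)$ and $a\in N(b)\cap N(c)$.
\end{itemize}

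\paragraph{The missing idea: use all three edges at once.} In the paper, the six endpoint neighborhoods are grouped into three pairs. Between any two pairs the intersection relation is a partial matching (Lemma~\ref{lem:cross}), and $\delta>1/3$ rules out an independent transversal. The argument then runs as follows.
\begin{enumerate}
\item Every cross matching is perfect. Suppose a member $A$ of one pair meets neither member of a second pair. Then for each member $B$ of the second pair, both members of the third pair must meet $A$ or $B$. Since $A$ meets at most one of them, the other must meet both members of the second pair, which the partial-matching property forbids.
\item The three matchings are consistently aligned, since a crossed one would make $A_1,B_2,B_3$ pairwise disjoint.
\item Lemma~\ref{lem:cross} then gives, for $i\ne j$, that $A_i\cap B_j=\varnothing$ and that $A_i,A_j$ and $B_i,B_j$ are anticomplete.
\item If the $i$th edge is triangular with apex $z\in A_i\cap B_i$, then $N(z),A_j,B_k$ are pairwise disjoint, where $\{i,j,k\}=\{1,2,3\}$. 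This is the contradiction.
\end{enumerate}
In this aligned configuration your endpoint transversal cannot exist, since any three choices include two $A$'s or two $B$'s. Your apex substitution also cannot work against a single mate, because $N(a)\cap N(b)\ne\varnothing$ for triangular $ab$. The two neighborhoods avoiding $N(z)$ and each other must come from two different edges of the triple.
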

\begin{proof}
Suppose $uv$ is compatible with $ab\in F$. Since $N(a)$ and $N(b)$ are disjoint, neither $N(u)$ nor $N(v)$ can avoid both, for that would give three disjoint neighborhoods. By Lemma~\ref{lem:cross} the intersections therefore form a perfect matching; align the pairs as in that lemma. If $uv\in T$, choose $z\in N(u)\cap N(v)$. By anticompleteness $N(z)$ avoids both $N(a)$ and $N(b)$, again giving three disjoint neighborhoods. This proves the first assertion.

For the second, consider the six indexed endpoint neighborhoods of three compatible edges, grouped into three pairs. Between any two pairs the intersection relation is a partial matching. There is no independent transversal of the three pairs, because three selected neighborhoods with no pairwise intersections would be disjoint.

We claim that every cross matching is perfect. Suppose a member $A$ of the first pair meets neither member of the second. For each member $B$ of the second pair, every member of the third must meet $A$ or $B$, since otherwise these three form an independent transversal. Each of $A,B$ has at most one cross neighbor in the third pair. Hence $A$ meets one member of the third pair, and \emph{both} members of the second pair must meet the other. This contradicts the partial-matching property.

Align the second and third pairs with the first, and denote their members by $A_i,B_i$ for $i=1,2,3$. The matching between pairs two and three is also aligned, since a crossed matching would leave $A_1,B_2,B_3$ pairwise disjoint. Thus, for distinct $i,j$, the sets $A_i,B_j$ are disjoint, while $A_i,A_j$ and $B_i,B_j$ are anticomplete by Lemma~\ref{lem:cross}. If the $i$th edge is triangular, take $z\in A_i\cap B_i$. For the two other indices $j,k$, the three neighborhoods $N(z),A_j,B_k$ are disjoint, a contradiction. Consequently all three edges lie in $F$.
\end{proof}

Let $\mathcal C_T$ be the graph whose vertices are the edge types in $T$ and whose edges are the compatible pairs. A \emph{capacitated fractional matching} in $\mathcal C_T$ assigns $z_{eg}\ge0$ to each unordered compatible pair $\{e,g\}$, subject to
\[
 \sum_{g:\{e,g\}\in E(\mathcal C_T)}z_{eg}\le c(e)\quad(e\in T).
\]
Let $\nu$ be the maximum of $\sum_{\{e,g\}}z_{eg}$, the value of a bounded linear program in finitely many variables.

\begin{proposition}[Exact palette decomposition]\label{prop:decomposition}
If $\delta>1/3$, then
\begin{equation}\label{eq:decomposition}
 \Phi=\Phi_F+c(T)-\nu=\Phi_F+m-f-\nu.
\end{equation}
\end{proposition}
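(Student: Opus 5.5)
By Lemma~\ref{lem:sectors}, every pattern contained in $E(Q)\setminus S$ lies entirely in $T$ or entirely in $J$. A pattern inside $T$ is a single edge of $T$ or a compatible pair of $T$-edges, that is, an edge of $\mathcal C_T$. An exact cover of $E(Q)\setminus S$ therefore splits into two independent parts: an exact cover of $J$ by patterns inside $J$, and an exact cover of $T$ by singletons and compatible pairs. The plan is to prove
\[
 \Phi=\Phi_F+\Phi_T,
\]
where $\Phi_T$ denotes the minimum cost of an exact cover of $T$, and then to show $\Phi_T=c(T)-\nu$. The second form of~\eqref{eq:decomposition} follows from $c(T)=m-f$, because $E(Q)=T\sqcup F$ and $f=c(F)$.

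\emph{Splitting.} Take an exact cover of $E(Q)\setminus S$. Its patterns contained in $J$ form an exact cover of $J$, and its remaining patterns are contained in $T$ and form an exact cover of $T$. Hence $\Phi\ge\Phi_F+\Phi_T$. Conversely, the union of an optimal cover of $J$ and an optimal cover of $T$ is an exact cover of $E(Q)\setminus S$, since $J$ and $T$ are disjoint. This gives equality. Compatibility is always computed in $Q$, so the pattern families agree in both directions.

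\emph{Triangular part.} Write a cover of $T$ as singleton amounts $s_e\ge0$ and pair amounts $z_{eg}\ge0$ on edges of $\mathcal C_T$, with
\[
 s_e+\sum_g z_{eg}=c(e)\qquad(e\in T).
\]
Its cost is $\sum_e s_e+\sum z_{eg}$. Summing the constraint over $e\in T$ counts each pair amount twice, so the cost equals $c(T)-\sum z_{eg}$. Feasibility holds exactly when $z$ is a capacitated fractional matching, because then $s_e=c(e)-\sum_g z_{eg}\ge0$. Minimizing the cost is therefore the same as maximizing $\sum z_{eg}$, which gives $\Phi_T=c(T)-\nu$. The maximum is attained because the linear program is bounded and feasible.

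The only substantive input is Lemma~\ref{lem:sectors}, which rules out mixed patterns and triangular triples; this is where $\delta>1/3$ enters. Everything else is bookkeeping. The step to watch is confirming that edges of $S$ never appear: the covered set is $E(Q)\setminus S=T\sqcup J$, and patterns are required to lie inside that covered set, so no $S$-edges occur.
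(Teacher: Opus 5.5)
Your proof is correct and takes the same route as the paper. Lemma~\ref{lem:sectors} splits every pattern into the $J$-sector or the $T$-sector, with $T$-patterns of size at most two. The pair amounts of a $T$-cover are then exactly a capacitated fractional matching, of cost $c(T)-\sum z_{eg}$; you simply make the splitting $\Phi=\Phi_F+\Phi_T$ explicit where the paper leaves it implicit.
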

\begin{proof}
By Lemma~\ref{lem:sectors}, each pattern lies wholly in $F$ or wholly in $T$, and a $T$-pattern is a singleton or a pair. The pair amounts of any exact $T$-cover form a feasible fractional matching, and if their sum is $v$, counting the covered capacity shows that the cost is $c(T)-v$. Conversely, a feasible matching extends to an exact cover of the same cost by covering its unused capacities with singletons. Minimizing gives $c(T)-\nu$, independently of the $J$-sector.
\end{proof}

\subsection{Private resources}
For $v\in V(Q)$ write $N_T(v)=\{u:uv\in T\}$ and $d_F(v)=x(N_F(v))$. Define
\begin{equation}\label{eq:resources}
 R_{uv}=N_T(u)\cup N_T(v)\quad(uv\in F),\qquad
 q(w)=\sum_{v:wv\in T}x_vd_F(v).
\end{equation}

\begin{lemma}[Private-resource prices]\label{lem:resources}
Suppose $\delta>1/3$. If $uv,ab\in F$ are compatible, then
\[
 R_{uv}\cap\bigl(N(a)\cup N(b)\bigr)=\varnothing.
\]
Consequently, for every $w$, the set
\[
 D_w=\{e\in J:w\in R_e\}
\]
meets every pattern in $J$ at most once, and
\begin{equation}\label{eq:resourcebound}
 c(D_w)=q(w),\qquad \Phi_F\ge\max_wq(w).
\end{equation}
\end{lemma}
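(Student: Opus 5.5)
The plan is to reduce everything to the aligned configuration of Lemma~\ref{lem:cross}, exactly as in the first part of the proof of Lemma~\ref{lem:sectors}, and then read off the disjointness from anticompleteness.

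\emph{Step 1 (alignment).} Since $uv,ab\in F$, neither edge lies in a triangle, so $N(u)\cap N(v)=\varnothing$ and $N(a)\cap N(b)=\varnothing$. As $\delta>1/3$, no three neighborhoods are pairwise disjoint. Hence each of $N(u),N(v)$ meets $N(a)$ or $N(b)$. By Lemma~\ref{lem:cross} the intersection pattern is therefore a perfect matching. Orient the edges so that $N(u)\cap N(a)\ne\varnothing$ and $N(v)\cap N(b)\ne\varnothing$. Lemma~\ref{lem:cross} then gives four facts:
\[
N(u)\cap N(b)=\varnothing,\qquad N(v)\cap N(a)=\varnothing,
\]
and the pairs $N(u),N(a)$ and $N(v),N(b)$ are anticomplete.

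\emph{Step 2 (the disjointness).} Let $w\in N_T(u)$; the case $w\in N_T(v)$ is symmetric. Since $uw\in T$, there is a vertex $z\in N(u)\cap N(w)$.
\begin{itemize}[leftmargin=2em]
\item $w\notin N(b)$, because $w\in N(u)$ and $N(u)\cap N(b)=\varnothing$.
\item $w\notin N(a)$. Otherwise $wz$ would be an edge joining $N(a)\ni w$ to $N(u)\ni z$, contradicting anticompleteness.
\end{itemize}
Thus $R_{uv}\cap(N(a)\cup N(b))=\varnothing$.

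\emph{Step 3 (each pattern meets $D_w$ at most once).} Suppose $e,g\in D_w$ are distinct members of one pattern in $J$. Then $e$ and $g$ are compatible edges of $F$, and $w\in R_e\cap R_g$. But $R_g$ is contained in the union of the endpoint neighborhoods of $g$. This contradicts Step~2 applied to the pair $(e,g)$.

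\emph{Step 4 (the mass identity).} An edge $e$ lies in $D_w$ exactly when $e=vv'\in J$ with $wv\in T$ for some endpoint $v$. I will show that
\[
D_w=\{vv'\in F:\ wv\in T\},
\]
with each edge arising from exactly one such endpoint $v$.
\begin{itemize}[leftmargin=2em]
\item Every $vv'\in F$ with $wv\in T$ lies in $J$ rather than $S$. Indeed $v$ lies in a triangle through $wv$, so $v\notin Z$.
\item No edge is counted twice. If both $wv$ and $wv'$ were in $T$, they would be edges, so $wvv'$ would be a triangle, contradicting $vv'\in F$.
\end{itemize}
Hence
\[
c(D_w)=\sum_{v:\,wv\in T}\ \sum_{v'\in N_F(v)}x_vx_{v'}=q(w).
\]

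\emph{Step 5 (the cover bound).} Put $y_e=\one_{\{e\in D_w\}}$. By Step~3 this satisfies~\eqref{eq:wholepattern} for patterns in $J$. Weak duality then gives $\Phi_F\ge c(D_w)=q(w)$ for every $w$.

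I expect the main care to be needed in Step~1, namely in justifying that the matching is perfect and correctly aligned. This is the same argument already used in Lemma~\ref{lem:sectors}, so it should transfer directly.
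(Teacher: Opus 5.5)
Your proposal is correct and follows the paper's proof essentially step for step. Both arguments obtain the perfect alignment from Lemma~\ref{lem:cross}, as in Lemma~\ref{lem:sectors}, and use a common neighbor of the triangular edge $uw$ to contradict anticompleteness of $N(u),N(a)$. Disjointness of resources then gives the pattern bound, and the fact that no nontriangular edge can have both endpoints $T$-adjacent to $w$ (with such edges avoiding $S$) gives $c(D_w)=q(w)$ before weak duality. Your Step~4 only spells out, slightly more explicitly than the paper, why $D_w=\{vv'\in F: wv\in T\}$.
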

\begin{proof}
The proof of Lemma~\ref{lem:sectors} gives a perfect alignment for the two edges. In that alignment $N(u)$ avoids $N(b)$, so $N_T(u)$ avoids $N(b)$. If $z\in N_T(u)\cap N(a)$, the triangular edge $uz$ has a common neighbor $t\in N(u)$ adjacent to $z$, and the edge $tz$ contradicts anticompleteness of $N(u),N(a)$. Similarly, $N_T(v)$ avoids both neighborhoods of the mate. This proves the asserted disjointness, and in particular $R_{uv}\cap R_{ab}=\varnothing$.

Hence a fixed $w$ lies in the resource of at most one edge of any pattern, and the indicator of $D_w$ satisfies~\eqref{eq:wholepattern}. Edges in $S$ have empty resources. Moreover, $N(u)\cap N(v)=\varnothing$ for $uv\in F$, so the two possible resource incidences of such an edge cannot both occur. Expanding the objective therefore gives
\[
 \sum_{uv\in F}x_ux_v\ind{w\in R_{uv}}
 =\sum_{v:wv\in T}x_v\sum_{u:uv\in F}x_u=q(w).
\]
The bound on $\Phi_F$ now follows from~\eqref{eq:wholepattern}.
\end{proof}

\section{Two constraints from the triangular matching}\label{sec:matching}
Fix an optimal fractional matching $z$ in $\mathcal C_T$. Define its edge usage, marking probability, total usage, and marked degree by
\begin{align}
 \sigma_e&=\sum_{g:\{e,g\}\in E(\mathcal C_T)}z_{eg},
 &\lambda_{uv}&=\frac{\sigma_{uv}}{x_ux_v}\quad(uv\in T),\label{eq:marks}\\
 M&=\sum_{e\in T}\sigma_e=2\nu,
 &k(v)&=\sum_{u:uv\in T}x_u\lambda_{uv}.
 \label{eq:markeddegree}
\end{align}
Set $\lambda_{uv}=0$ off $T$. The capacity constraints give $0\le\lambda_{uv}\le1$, and
\[
 \sum_vx_vk(v)=2M.
\]
The marks satisfy two constraints, one for each vertex and one for each marked edge.

\begin{lemma}[Root-incidence budget]\label{lem:root}
For every $w\in V(Q)$,
\begin{equation}\label{eq:rootbudget}
 K(w):=M-\sum_{v\in N(w)}x_vk(v)\ge0.
\end{equation}
\end{lemma}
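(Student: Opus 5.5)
The inequality~\eqref{eq:rootbudget} says that the marked edges lying inside $N(w)$, counted with endpoint multiplicity, use at most the total usage $M$. My plan is to expand both sides over the matching amounts $z_{eg}$. This reduces the claim to a purely local statement about a single compatible pair of triangular edges, which I will then settle directly from Definition~\ref{def:compatibility}.

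\emph{Expansion.} By~\eqref{eq:marks} and~\eqref{eq:markeddegree},
\[
 \sum_{v\in N(w)}x_vk(v)=\sum_{v\in N(w)}\ \sum_{u:uv\in T}x_ux_v\lambda_{uv}
 =\sum_{e\in T}\sigma_e\,\abs{e\cap N(w)},
\]
where $e$ is regarded as its two-element endpoint set. Each edge $e$ is counted once for each of its endpoints lying in $N(w)$. Next I substitute $\sigma_e=\sum_g z_{eg}$ both here and in $M=\sum_e\sigma_e$, and regroup by unordered matched pairs. This gives
\[
 K(w)=\sum_{\{e,g\}\in E(\mathcal C_T)} z_{eg}\Bigl(2-\abs{e\cap N(w)}-\abs{g\cap N(w)}\Bigr).
\]
Since every $z_{eg}\ge0$, it suffices to show the following: for any compatible pair $e,g\in T$, at most two of the four endpoints (counted with multiplicity) lie in $N(w)$.

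\emph{Local claim.} Suppose instead that at least three endpoints lie in $N(w)$. Then one of the edges, say $e=ab$, has both endpoints in $N(w)$, and the other edge $g=cd$ has an endpoint in $N(w)$, say $c$. This gives a two-walk $a\,w\,c$, so $(A_Q^2)_{ac}>0$. It also gives a three-walk $b\,w\,c\,d$, so $(A_Q^3)_{bd}>0$. This contradicts~\eqref{eq:compatibility} for the orientations $(a,b)$ and $(c,d)$. The case where the roles of $e$ and $g$ are swapped is symmetric.

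The only point requiring care is that endpoints may coincide, for instance $c=a$. This is harmless, because walks may repeat vertices and be closed, and compatibility is stated for all orientations. In particular the argument uses neither $\delta>1/3$ nor the triangularity of the edges. The main step is recognizing the regrouping by matched pairs; after that the walk argument is immediate.
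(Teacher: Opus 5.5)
Your proposal is correct and follows the paper's proof. Both rewrite $K(w)$ as $\sum_{\{e,g\}}z_{eg}\bigl(2-|e\cap N(w)|-|g\cap N(w)|\bigr)$ and rule out three endpoint incidences in $N(w)$ using the walks $awc$ and $bwcd$, which violate compatibility; your remark that neither $\delta>1/3$ nor triangularity is needed is also accurate.
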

\begin{proof}
A compatible pair $e,g$ has at most two endpoint incidences in $N(w)$, where an endpoint is counted once for each edge containing it. Otherwise, after orienting the pair as $ab,cd$, the vertex $w$ is adjacent to $a,b,c$, and the walks $awc$ and $bwcd$ contradict compatibility. Hence
\[
 K(w)=\sum_{\{e,g\}}z_{eg}
 \bigl(2-\abs{e\cap N(w)}-\abs{g\cap N(w)}\bigr)\ge0,
\]
where the equality follows directly from~\eqref{eq:markeddegree}.
\end{proof}

\begin{lemma}[Selected-edge union bound]\label{lem:union}
Let $uv\in T$ have a compatible mate in $T$. There is a vertex $z$ such that
\[
 N(z)\cap\bigl(N(u)\cup N(v)\bigr)=\varnothing.
\]
Consequently,
\begin{equation}\label{eq:union}
 \lambda_{uv}>0\quad\Longrightarrow\quad
 x\bigl(N(u)\cup N(v)\bigr)\le1-\delta<\frac23
\end{equation}
when $\delta>1/3$.
\end{lemma}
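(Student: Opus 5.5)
The plan is to take $z$ to be either an endpoint of the mate or the apex of a triangle on the mate, depending on which case of Lemma~\ref{lem:cross} occurs. Let $ab\in T$ be a compatible mate of $uv$. Since $ab$ is triangular, fix a common neighbor $c$ of $a$ and $b$. Apply Lemma~\ref{lem:cross} to the pairs $\{N(u),N(v)\}$ and $\{N(a),N(b)\}$; the nonempty intersections between them form a partial matching. This first part uses no assumption on $\delta$.

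\emph{Non-perfect case.} Suppose the matching is not perfect. Then some member of one pair meets neither member of the other pair, and I would split into two subcases.
\begin{enumerate}
\item If a mate neighborhood, say $N(a)$, meets neither $N(u)$ nor $N(v)$, take $z=a$.
\item Otherwise, say $N(u)$ meets neither $N(a)$ nor $N(b)$. The set $N(v)$ meets at most one of $N(a)$ and $N(b)$, say at most $N(a)$. Then $N(b)$ misses both $N(u)$ and $N(v)$, and we take $z=b$.
\end{enumerate}

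\emph{Perfect case.} Suppose the matching is perfect, oriented so that $N(u)\cap N(a)$ and $N(v)\cap N(b)$ are nonempty. Lemma~\ref{lem:cross} then says that $N(u),N(a)$ are anticomplete and that $N(v),N(b)$ are anticomplete. Take $z=c$. Suppose some $y\in N(c)\cap N(u)$ existed. Then $y\in N(u)$, $c\in N(a)$ and $y\sim c$, which contradicts the anticompleteness of $N(u)$ and $N(a)$. Symmetrically, $c\in N(b)$ shows that $N(c)\cap N(v)=\varnothing$. Hence $N(c)$ avoids $N(u)\cup N(v)$. This is the step I expect to need the most care: one has to see that the apex $c$ of a triangle on the mate lies in both mate neighborhoods, so that the two aligned anticompleteness relations together shield the whole of $N(c)$.

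\emph{The weight bound.} If $\lambda_{uv}>0$, then $\sigma_{uv}>0$, so some $z_{uv,g}>0$ and $uv$ has a compatible mate $g\in T$. The first part then gives a vertex $z$ with $N(z)$ disjoint from $N(u)\cup N(v)$. Since the total weight is $1$,
\[
 x\bigl(N(u)\cup N(v)\bigr)\le 1-D(z)\le 1-\delta .
\]
When $\delta>1/3$, this is less than $2/3$.
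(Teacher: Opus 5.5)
Your proof is correct and matches the paper's argument: when the cross-intersection matching of Lemma~\ref{lem:cross} has at most one entry you take an endpoint of the mate as $z$, and in the perfect case you take a common neighbor of the mate's endpoints, whose neighborhood is cut off from $N(u)\cup N(v)$ by the two aligned anticompleteness relations. Your derivation of a triangular mate from $\lambda_{uv}>0$, and the bound $x(N(u)\cup N(v))\le 1-D(z)\le 1-\delta$, also coincide with the paper's.
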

\begin{proof}
Choose a compatible triangular mate $ab$. If the partial matching of cross-intersections in Lemma~\ref{lem:cross} has at most one entry, one of $N(a),N(b)$ avoids both $N(u)$ and $N(v)$, and we take the corresponding endpoint as $z$. Otherwise the matching is perfect; orient it so that the aligned pairs are $N(u),N(a)$ and $N(v),N(b)$. Since $ab$ is triangular, there is $z\in N(a)\cap N(b)$, and anticompleteness of the aligned pairs implies that $N(z)$ avoids $N(u)\cup N(v)$. In either case the union is disjoint from a neighborhood of mass at least $\delta$.

Finally, $\lambda_{uv}>0$ means $\sigma_{uv}>0$, so some pair containing $uv$ has positive matching amount and supplies a triangular mate.
\end{proof}

In the next section we set the matching aside and prove a polynomial inequality for arbitrary marks satisfying these two constraints.

\section{The exact polynomial certificate}\label{sec:certificate}
We now prove the weighted inequality behind the lower bound. The proof has two parts: a finite identity, one equation for each admissible colored graph on five vertices, verified by computer in exact rational arithmetic; and an averaging argument showing that every term of the identity contributes with the correct sign.

\subsection{The marked-host statement}
Let $Q$ again have positive vertex weights summing to one, but in this section allow any partition $E(Q)=F\sqcup T$ such that \emph{no triangle contains an edge of $F$}. Set $f=c(F)$ and define $d_F$ and $q$ by~\eqref{eq:resources}. Assign marks $0\le\lambda_e\le1$ to the edges of $T$, with $\lambda_e=0$ off $T$, and put
\[
 \sigma_{uv}=x_ux_v\lambda_{uv},\qquad
 M=\sum_{uv\in T}\sigma_{uv},\qquad
 k(v)=\sum_u x_u\lambda_{uv}.
\]
The marks need not come from a matching. Define
\begin{equation}\label{eq:triangleobservables}
 \tau(w)=c\bigl(E(Q[N(w)])\bigr),\qquad
 \Delta=\sum_wx_w\tau(w),\qquad
 B=\sum_wx_w\tau(w)q(w).
\end{equation}
Thus $\Delta$ is three times the weighted number of triangles, an unordered triangle $uvw$ having weight $x_ux_vx_w$.

\begin{theorem}[Exact marked-host inequality]\label{thm:polynomial}
Suppose $\delta\ge1/3$, the root budgets
\[
 M-\sum_{v\in N(w)}x_vk(v)\ge0\quad(w\in V(Q))
\]
hold, and
\[
 \lambda_{uv}>0\ \Longrightarrow\
 x(N(u)\cup N(v))\le2/3.
\]
Then, if $m\ge1/4$,
\begin{equation}\label{eq:polynomial}
 \cL:=2B+\left(\frac12-m-2f-M\right)\Delta\ge0.
\end{equation}
More generally, if $\xi\ge0$ and $m\ge1/4-\xi$, then
\begin{equation}\label{eq:polynomialstable}
 \cL\ge-a_*\xi\ge-\xi,
 \qquad a_*=\frac{383936867}{1000000000}<1.
\end{equation}
\end{theorem}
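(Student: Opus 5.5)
The plan is a flag-algebra style sum-of-squares argument, made exact on five sampled vertices. First I would write every quantity in \eqref{eq:polynomial} as the expectation of a function of a random $5$-tuple of vertices sampled independently with law $x$. The graph induced on the sample has each edge colored $F$ or $T$, with $T$-edges carrying their marks $\lambda$. Then $m=\Prob[v_1v_2\in E]/1$ up to the factor $\tfrac12$ convention, $f$ and $M$ are similar edge densities, $\tau(w)$ is a two-vertex density rooted at $w$, and $q(w)$ is a rooted path density $w\!-\!v\!-\!u$ with $wv\in T$ and $vu\in F$. Hence $B$ is a density on at most five vertices: the root, the two vertices of an edge inside $N(\text{root})$, and the two further vertices of the path. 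After that substitution, $\cL$, $m-\tfrac14$, and each hypothesis become averages, over admissible colored five-vertex configurations, of polynomials in the weights and marks. Admissible means that no triangle contains an $F$-edge.

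Second, I would assemble the certificate as a nonnegative combination of the following terms:
\begin{enumerate}
\item $\tfrac12\bigl(m-\tfrac14\bigr)\cdot(\text{nonnegative density})$;
\item degree terms $(D(w)-\tfrac13)\cdot(\text{nonnegative rooted density})$;
\item root-budget terms $K(w)\cdot(\text{rooted nonnegative density})$, using Lemma~\ref{lem:root};
\item union terms $\lambda_{uv}\bigl(\tfrac23-x(N(u)\cup N(v))\bigr)\cdot(\cdots)$, using Lemma~\ref{lem:union};
\item the box constraints $\lambda(1-\lambda)\ge0$;
\item Cauchy--Schwarz squares $\E_w[(\sum_i\alpha_i g_i(w))^2]$ of rooted flag densities, which are positive semidefinite quadratic forms.
\end{enumerate}
The identity to prove states that $\cL$ equals this combination, coefficient by coefficient over all admissible colored five-vertex types. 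I would find the multipliers and Gram matrices with an SDP solver. I would then round them to rationals, keeping the Gram matrices positive semidefinite via an exact $LDL^{\top}$ decomposition, and check the identity in exact arithmetic. The marks enter linearly or quadratically in each term, so the types would be decorated by which $T$-edges are sampled with their mark. Each mark can be interpreted as a Bernoulli indicator, which keeps the type space finite.

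Third, the averaging step verifies the sign of each term. Every product term is an expectation of a product of an assumed-nonnegative quantity and a nonnegative density. Each square is an average of squares. One subtlety needs care: hypotheses such as $D(w)\ge\tfrac13$ are per-vertex facts, so they may be multiplied only by densities rooted at the same $w$, and the averaging must respect this. For the stable version, only the $m$-term changes sign. If $m\ge\tfrac14-\xi$, that term is at least $-a\xi$, where $a$ is its coefficient multiplied by the maximal value of its companion density (at most $1$). Choosing the certificate to minimize this coefficient gives $a_*$, and $a_*<1$ then follows by comparing the rational numbers.

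The main obstacle is the existence of the certificate: the inequality must actually be provable by Cauchy--Schwarz on five vertices with exactly these constraints. The hardest part will be combining the nonpolynomial union constraint and the root budgets with the squares, since these are the only ingredients that tie $M$ to the triangle mass. If five vertices turn out to be insufficient, I would first add products of two constraints before enlarging the sample. Rounding to an exact certificate with enough slack is a secondary, technical difficulty.
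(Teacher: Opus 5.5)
Your plan is essentially the paper's proof. The paper uses a five-position sampling space in which the $T$-marks become independent Bernoulli colors. It proves a symmetrized exact rational identity that combines Gram (Cauchy--Schwarz) terms, density, degree, root-budget and union multipliers, and nonnegative slacks. An averaging step conditions on the root or on the marked edge, and the stable bound comes only from $0\le\alpha_\theta\le a_*$ on the density multipliers.

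A few minor remarks. Here the root and union conditions are hypotheses on arbitrary marks, not consequences of Lemmas~\ref{lem:root} and~\ref{lem:union}. Your box constraints $\lambda(1-\lambda)\ge0$ are unnecessary because the Bernoulli sampling already encodes $0\le\lambda\le1$. The real content is the exact certificate of Lemma~\ref{lem:certificate}, which the paper supplies and verifies.
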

The other hypotheses are the same in both assertions; only the density condition is weakened.

\subsection{A five-position probability space}
Sample host vertices $X_0,\ldots,X_4$ independently according to $x$. Conditional on these types, color each unordered pair of \emph{positions} independently, according to
\[
 a_{ij}=\begin{cases}
 0,&X_iX_j\notin E(Q),\\
 1,&X_iX_j\in F,\\
 3\text{ with probability }\lambda_{X_iX_j},\ \ 2\text{ otherwise},
   &X_iX_j\in T.
 \end{cases}
\]
Host types may repeat, and $X_i=X_j$ gives $a_{ij}=0$. When two position pairs represent the same host edge, their marks are still independent. The root and union constraints will enter through conditional expectations in this probability space.

A colored graph is \emph{admissible} if no triangle has an edge of color~1; every sample is admissible. For an ordered sample, abbreviate
\[
 E_{ij}=\ind{a_{ij}>0},\quad F_{ij}=\ind{a_{ij}=1},\quad
 T_{ij}=\ind{a_{ij}\ge2},\quad L_{ij}=\ind{a_{ij}=3},
 \qquad t=E_{01}E_{02}E_{12},
\]
and define the objective contribution
\begin{equation}\label{eq:orderedobjective}
 o(a)=4tT_{03}F_{34}+t-tE_{34}-2tF_{34}-tL_{34}.
\end{equation}
The elementary identities
\begin{gather*}
 \E t=2\Delta,\qquad \E(tT_{03}F_{34})=2B,\\
 \E E_{34}=2m,\qquad \E F_{34}=2f,\qquad \E L_{34}=2M
\end{gather*}
give
\begin{equation}\label{eq:objectiveexpectation}
 \E o(a)=4\cL.
\end{equation}
For the mixed term, condition on $X_0=w$: the extensions on positions $1,2$ and on $3,4$ are independent, with expectations $2\tau(w)$ and $q(w)$. In the other products the triangle on $0,1,2$ is independent of the pair $3,4$. These computations remain valid when host types coincide.

\subsection{The finite identity}
We now describe the flags and matrices in the certificate. A flag is a colored graph with specified labeled vertices, and isomorphisms of flags fix the labels. Canonical representatives are chosen as described in Appendix~\ref{app:data}.

Let $\mathcal A$ and $\mathcal D$ be the admissible flags on three and four positions, respectively, with position~0 labeled. Let $\mathcal T_3$ be the admissible unrooted three-position types. Finally, let $\mathcal U$ be the admissible four-position flags with ordered labeled pair $(0,1)$ of color~3, so that only positions~2 and~3 may be exchanged. Then
\begin{equation}\label{eq:flagcounts}
 |\mathcal A|=28,\quad |\mathcal D|=294,\quad
 |\mathcal T_3|=14,\quad |\mathcal U|=190.
\end{equation}
For each $\theta\in\mathcal T_3$, fix its canonical labeled representative on positions $0,1,2$, and let $\mathcal V_\theta$ be the set of allowed attachment vectors $(a_{03},a_{13},a_{23})$.

Write $A_{ijk}$ for the flag in $\mathcal A$ induced on $(i,j,k)$ with root $i$, $D_{ijkl}$ for the corresponding flag in $\mathcal D$, and $\theta_{ijk}$ for the unrooted type. When $a_{01}=3$, write $U_{0123}$ for the ordered-edge-rooted flag. For the fixed labeled type on $0,1,2$, let $v_3,v_4$ denote the two attachment vectors.

The data consist of symmetric rational matrices $G_0$ on $\mathcal A$ and $G_\theta$ on $\mathcal V_\theta$, nonnegative rational arrays
\[
 (\alpha_\theta)_{\mathcal T_3},\qquad
 (\beta_D)_{\mathcal D},\qquad
 (\gamma_A)_{\mathcal A},\qquad
 (\eta_U)_{\mathcal U},
\]
and nonnegative rational slacks $s_H$, one for each admissible unlabeled five-position graph $H$. Set
\begin{align}
 g(a)={}&4G_0[A_{012},A_{034}]
 +4\sum_{\theta\in\mathcal T_3}
   \ind{a|_{012}=\theta}G_\theta[v_3,v_4],
 \label{eq:gramcolumn}\\
 d(a)={}&\alpha_{\theta_{012}}(2E_{34}-1),\label{eq:densitycolumn}\\
 h(a)={}&\frac43\beta_{D_{0123}}(3E_{04}-1),\label{eq:degreecolumn}\\
 r(a)={}&2\gamma_{A_{034}}L_{12}(1-E_{01}-E_{02}),\label{eq:rootcolumn}\\
 u(a)={}&\begin{cases}
 \displaystyle\frac43\eta_{U_{0123}}
 \bigl(2-3\ind{E_{04}=1\ \mathrm{or}\ E_{14}=1}\bigr),&a_{01}=3,\\
 0,&a_{01}\ne3.
 \end{cases}\label{eq:unioncolumn}
\end{align}
In~\eqref{eq:gramcolumn}, $a|_{012}=\theta$ denotes equality of labeled graphs, whereas the index $\theta_{012}$ in~\eqref{eq:densitycolumn} is the unrooted isomorphism type.

\begin{lemma}[Finite rational certificate]\label{lem:certificate}
The rational data described in Appendix~\ref{app:data} have the following properties. Every $G_i$ has a factorization $G_i=V_iR_iV_i^{\mathsf T}$ with $V_i$ integral and $R_i$ rational positive definite. All scalar multipliers and all $s_H$ are nonnegative, and $\max_\theta\alpha_\theta=a_*$. For every admissible unlabeled five-position graph $H$,
\begin{equation}\label{eq:rowidentity}
 \sum_{\pi\in S_5}o(H^\pi)
 =\sum_{\pi\in S_5}\bigl(g+d+h+r+u\bigr)(H^\pi)+s_H.
\end{equation}
There are exactly $117916$ admissible labeled graphs and $1436$ disjoint unlabeled orbits covering them.
\end{lemma}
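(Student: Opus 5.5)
This lemma is a computational statement, so the proof is a verification procedure, organized to be independently reproducible in exact arithmetic. First, generate the combinatorial universe. Enumerate all colorings $a\in\{0,1,2,3\}^{\binom{[5]}{2}}$ of the ten position pairs and keep the \emph{admissible} ones, those in which no triangle uses an edge of color~1. Then count them, confirming $117916$, and partition them into $S_5$-orbits, confirming $1436$. Canonical representatives follow Appendix~\ref{app:data}: lexicographically least relabeling, with labels fixed as required. In the same pass, enumerate the flag sets $\mathcal A,\mathcal D,\mathcal T_3,\mathcal U$ and the attachment sets $\mathcal V_\theta$, and check the counts in~\eqref{eq:flagcounts}. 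For every labeled sample we also precompute the induced flags $A_{012},A_{034},D_{0123},U_{0123}$, the types $\theta_{012}$, and the vectors $v_3,v_4$. Each term of~\eqref{eq:gramcolumn}--\eqref{eq:unioncolumn} is then a table lookup.

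Second, check the certificate data themselves. Parse the rational matrices and the factors $V_i,R_i$, and verify $G_i=V_iR_iV_i^{\mathsf T}$ exactly, entry by entry, with $V_i$ integral. Positive definiteness of each $R_i$ is certified by an exact $LDL^{\mathsf T}$ (Cholesky-type) decomposition over $\mathbb Q$ with strictly positive pivots. Equivalently, one checks that all leading principal minors are positive. Then verify entrywise that $\alpha,\beta,\gamma,\eta,s\ge0$, and compute $\max_\theta\alpha_\theta$ to confirm it equals $a_*$ exactly.

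Third, verify the row identities~\eqref{eq:rowidentity}. For each orbit representative $H$, sum $o$ and $g+d+h+r+u$ over the $120$ relabelings $H^\pi$, using the precomputed lookups and rational arithmetic with no floating point anywhere. Then check that the difference equals the listed $s_H$. The sum over $S_5$ is taken with multiplicity, not over distinct relabelings, so a stabilizer simply repeats terms. This matches how the averaging argument later uses the identity: expectation over exchangeable independent positions.

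The main obstacle is consistency between conventions, not arithmetic. The notion of labeled equality $a|_{012}=\theta$ used in~\eqref{eq:gramcolumn} must agree with the chosen canonical representatives. The ordering of the orbits must also match between the two sides of the identity. An error here would silently break~\eqref{eq:rowidentity}. To guard against this, I would cross-check with two independent implementations, for example a Python/\texttt{fractions} checker and the Lean formalization of Section~\ref{sec:verification}. As a sanity test, I would also confirm on random weighted hosts that the sample average of $o$ is close to $4\cL$, as predicted by~\eqref{eq:objectiveexpectation}.
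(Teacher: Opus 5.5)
Your proposal is correct and follows essentially the same route as the paper's computer-assisted proof: enumerate the $4^{10}$ colorings and their $S_5$-orbits, certify each $R_i$ positive definite by exact symmetric elimination with strictly positive pivots, check the signs of the multipliers and slacks and the value of $a_*$, and verify the $1436$ row identities in exact rational arithmetic, with the paper additionally rechecking all of this in Lean. The only cosmetic difference is that the paper defines each $G_i$ directly as $V_iR_iV_i^{\mathsf T}$, so there is no separately supplied matrix to compare against.
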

\begin{proof}[Computer-assisted proof]
Enumerating the $4^{10}$ color assignments on five positions gives the admissible labeled graphs. The orbits of the chosen representatives under the $120$ permutations are pairwise disjoint and cover this set. The flag and attachment lists come from the same enumeration with the prescribed labels fixed.

For each rational matrix $R_i$, exact symmetric elimination produces strictly positive pivots, so $R_i$ is positive definite and $G_i=V_iR_iV_i^{\mathsf T}$ is positive semidefinite. Substituting these matrices and the rational scalar multipliers into~\eqref{eq:rowidentity} verifies the identity for each orbit, with nonnegative slacks. All scalar multipliers are nonnegative, and the largest density multiplier is $a_*$.

All of these computations are exact. They were carried out by an independently written verifier in integer and rational arithmetic, and they are also checked in the Lean development described in Section~\ref{sec:verification}. Table~\ref{tab:certificate} lists the sizes of the data.
\end{proof}

\begin{table}[tb]
\centering
\caption{Sizes of the rational certificate. Omitted scalar entries are zero.}\label{tab:certificate}
\begin{tabular}{lr}
\toprule
Object & Number\\
\midrule
Admissible labeled five-position graphs & $117916$\\
Unlabeled orbits and coefficient identities & $1436$\\
Positive semidefinite Gram blocks & $15$\\
Strictly positive rational elimination pivots & $456$\\
Positive scalar multipliers & $481$\\
\quad density / degree / root / union & $13\,/\,256\,/\,28\,/\,184$\\
Strictly positive / exactly zero slacks & $1397\,/\,39$\\
\bottomrule
\end{tabular}
\end{table}

\subsection{Averaging the identity}
\begin{proof}[Proof of Theorem~\ref{thm:polynomial}]
The sampling law is invariant under permutations of the positions. Average~\eqref{eq:rowidentity} over the distribution of unlabeled samples and divide by $480=120\cdot4$. By~\eqref{eq:objectiveexpectation}, the left side becomes $\cL$. We show that each term on the right has nonnegative expectation, except for the density term.

For the root Gram term, condition on $X_0=w$. The two rooted flags on $0,1,2$ and on $0,3,4$ are then independent and identically distributed. If $p(w)$ is their conditional probability vector, their normalized contribution is
\[
 \E_w\bigl[p(w)^{\mathsf T}G_0p(w)\bigr]\ge0.
\]
For a three-root term, condition on $X_0,X_1,X_2$ and on the three marks among these positions. The two new attachments are again conditionally independent with the same distribution, so the contribution is a nonnegative quadratic form multiplied by the probability of its labeled type.

Independence of $0,1,2$ from $3,4$ gives
\begin{equation}\label{eq:densityexpectation}
 \frac14\E d(a)
 =(m-1/4)\sum_{\theta\in\mathcal T_3}
       \alpha_\theta\Prob(\theta_{012}=\theta).
\end{equation}
The type probabilities sum to one. Since position~4 is fresh, the degree term becomes a sum of nonnegative flag densities multiplied by $D(X_0)-1/3$, and is therefore nonnegative.

For the root term, condition on $X_0=w$ and put
\[
 R(w)=\sum_{v\in N(w)}x_vk(v).
\]
Then $\E(L_{12}\mid X_0=w)=2M$ and
\[
 \E(L_{12}E_{01}\mid X_0=w)
 =\E(L_{12}E_{02}\mid X_0=w)=R(w),
\]
so the conditional expectation of $2L_{12}(1-E_{01}-E_{02})$ is $4K(w)$. The extension on $1,2$ is conditionally independent of the flag on $0,3,4$. After division by four,~\eqref{eq:rootcolumn} therefore contributes a nonnegative flag density times $K(w)$, averaged over $w$.

For the union term, condition on the four types and the marks that determine its flag. If the selected root event has positive probability, the underlying root edge has positive $\lambda$. The fresh fifth type lies in the union of the two full neighborhoods with probability equal to the weight of that union. The normalized conditional expectation is
\[
 \eta_{U_{0123}}\ind{a_{01}=3}
 \left(\frac23-x(N(X_0)\cup N(X_1))\right),
\]
which is nonnegative on the flag event.

Finally, if $p_H$ is the probability of orbit $H$, the normalized slack contribution is $\sum_Hp_Hs_H/480\ge0$. Here the sum over permutations counts multiplicities, including distinct permutations that yield the same labeled graph.

If $m\ge1/4$, then~\eqref{eq:densityexpectation} is nonnegative as well, which proves~\eqref{eq:polynomial}. If $m\ge1/4-\xi$, its value is at least $-a_*\xi$, because the type probabilities sum to one and $0\le\alpha_\theta\le a_*$. All other terms remain nonnegative, which proves~\eqref{eq:polynomialstable}.
\end{proof}

The certificate is a flag-algebra computation in the sense of Razborov~\cite{Razborov2007}, although we have phrased its interpretation directly in terms of finite sampling. The matrices were found by semidefinite programming and then replaced by exact rational factorizations; only these factorizations and the identity~\eqref{eq:rowidentity} enter the proof.

\section{Strict and stable finite palette bounds}\label{sec:finite}
We return to the triangular and nontriangular sectors of Section~\ref{sec:palettes} and assume $\delta>1/3$. The optimal matching of Section~\ref{sec:matching} has $M=2\nu$, and its marks satisfy the degree, root, and union hypotheses of Theorem~\ref{thm:polynomial}.

\begin{lemma}[Weighted triangle positivity]\label{lem:mantel}
If $m>1/4$, then $\Delta>0$.
\end{lemma}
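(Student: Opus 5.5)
This is a weighted Mantel theorem: if the weighted graph has no triangle, then its edge mass is at most $1/4$. I would prove it by contraposition. Suppose $\Delta=0$. Since all weights $x_v$ are positive, $\Delta=\sum_w x_w\tau(w)$ vanishes only if $\tau(w)=0$ for every $w$. Thus each neighborhood $N(w)$ spans no edge, and $Q$ is triangle-free. It remains to show that a triangle-free $Q$ with positive weights summing to one has $m\le 1/4$, which contradicts $m>1/4$.

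For the weighted Mantel bound I would use the standard Motzkin--Straus style argument on the quadratic form
\[
 m(x)=\sum_{uv\in E(Q)}x_ux_v
\]
over the simplex. Its maximum over the simplex is attained at some weight vector $y$, and I would choose one with minimal support. If two vertices $u,v$ in the support are nonadjacent, move weight between them. Along the line $y+t(e_u-e_v)$ the form is linear in $t$, because the $t^2$ coefficient would come only from an edge $uv$, and there is none. So one can shift all the weight of $v$ onto $u$ (or the reverse) without decreasing $m$, which shrinks the support and contradicts minimality. Hence the support of $y$ is a clique. A triangle-free graph has cliques of size at most two, so the maximum is at most $\max_{a+b=1}ab=1/4$. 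The original $x$ is one point of the simplex, so $m=m(x)\le 1/4$.

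An alternative that avoids the optimization is a direct averaging inequality. For each edge $uv$ of a triangle-free $Q$, the neighborhoods $N(u)$ and $N(v)$ are disjoint, so $D(u)+D(v)\le 1$. Summing $x_ux_v(D(u)+D(v))\le x_ux_v$ over edges gives $\sum_w x_wD(w)^2\le m$. Cauchy--Schwarz gives $\sum_w x_wD(w)^2\ge(\sum_w x_wD(w))^2=4m^2$. Together these give $4m^2\le m$, so $m\le1/4$. I would present this version since it is one line.

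Neither step is a real obstacle. The only points needing care are that positivity of every $x_w$ is what turns $\Delta=0$ into "no triangle at all", and that in the averaging argument one must check the identity $\sum_{uv\in E}x_ux_v(D(u)+D(v))=\sum_w x_wD(w)^2$, which holds because each vertex $w$ appears once for each incident edge.
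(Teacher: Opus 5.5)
Your proof is correct, and the averaging version you choose to present is exactly the paper's argument. Positivity of the weights turns $\Delta=0$ into triangle-freeness, $D(u)+D(v)\le1$ on edges gives $\sum_w x_wD(w)^2\le m$, and Cauchy--Schwarz with $\sum_w x_wD(w)=2m$ gives $4m^2\le m$, while your Motzkin--Straus alternative is also valid but not needed.
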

\begin{proof}
If $\Delta=0$, then $Q$ is triangle-free, since all vertex weights are positive. Hence $D(u)+D(v)\le1$ on every edge, and summing with edge weights yields
\[
 \sum_vx_vD(v)^2
 =\sum_{uv\in E(Q)}x_ux_v\bigl(D(u)+D(v)\bigr)\le m.
\]
On the other hand, by the Cauchy--Schwarz inequality and $\sum_vx_vD(v)=2m$, the left side is at least $4m^2$. Thus $m\le1/4$, a contradiction.
\end{proof}

\begin{theorem}[Finite palette bound]\label{thm:finite}
Every positively weighted finite simple graph with $\delta>1/3$ and $m>1/4$ satisfies
\begin{equation}\label{eq:finite}
 \Phi\ge\frac32m-\frac14.
\end{equation}
\end{theorem}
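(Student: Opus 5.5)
The plan is to combine the exact decomposition of Proposition~\ref{prop:decomposition} with the resource bound of Lemma~\ref{lem:resources} and the polynomial inequality of Theorem~\ref{thm:polynomial}, using the average of $q$ weighted by triangles as the bridge. By Proposition~\ref{prop:decomposition}, $\Phi=\Phi_F+m-f-\nu$. Hence~\eqref{eq:finite} is equivalent to
\[
 \Phi_F\ge\frac m2+f+\nu-\frac14 .
\]
By Lemma~\ref{lem:resources} we have $\Phi_F\ge\max_wq(w)$. It therefore suffices to show that some weighted average of $q$ is at least the right-hand side.

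First I will check that the optimal fractional matching $z$ of Section~\ref{sec:matching} supplies marks satisfying the hypotheses of Theorem~\ref{thm:polynomial}, with $T$ the triangular edges and $F$ the rest.
\begin{enumerate}
\item No triangle contains an edge of $F$, by definition of $F$.
\item The capacity constraints give $0\le\lambda_e\le1$.
\item The quantities $\sigma$, $M$ and $k$ defined in~\eqref{eq:marks}--\eqref{eq:markeddegree} agree with those of Section~\ref{sec:certificate}, and $M=2\nu$.
\item The root budgets are Lemma~\ref{lem:root}.
\item The union hypothesis is~\eqref{eq:union} from Lemma~\ref{lem:union}, since $1-\delta<2/3$.
\item The condition $\delta\ge1/3$ holds because $\delta>1/3$.
\end{enumerate}
Since $m>1/4$, inequality~\eqref{eq:polynomial} applies and gives
\[
 2B\ge\Bigl(m+2f+2\nu-\frac12\Bigr)\Delta .
\]

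Next, Lemma~\ref{lem:mantel} gives $\Delta>0$, so we may divide by $\Delta$. The weights $x_w\tau(w)/\Delta$ are nonnegative and sum to one, so
\[
 \frac{B}{\Delta}=\sum_w\frac{x_w\tau(w)}{\Delta}\,q(w)\le\max_wq(w)\le\Phi_F .
\]
Dividing the previous inequality by $2\Delta$ gives $B/\Delta\ge m/2+f+\nu-1/4$. Chaining these bounds yields the required bound on $\Phi_F$, and substituting into the decomposition gives $\Phi\ge\frac32m-\frac14$.

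All the heavy lifting is in the results already proved, so no step here is a serious obstacle. The point needing the most care is the bookkeeping that the matching marks match the marked-host setup exactly. In particular, the identification $M=2\nu$ is what turns the coefficient $-M$ in~\eqref{eq:polynomial} into the $-2\nu$ matching the $-\nu$ in Proposition~\ref{prop:decomposition}.
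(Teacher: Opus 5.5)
Your proof is correct and follows the paper's argument exactly. The paper likewise gets $\Delta>0$ from Lemma~\ref{lem:mantel}. It then chains $\Phi_F\ge\max_wq(w)\ge B/\Delta\ge f+\nu+m/2-1/4$ using Lemma~\ref{lem:resources} and Theorem~\ref{thm:polynomial} with $M=2\nu$, and adds $m-f-\nu$ via Proposition~\ref{prop:decomposition}. Your check that the optimal matching's marks satisfy the hypotheses of Theorem~\ref{thm:polynomial} is the same bookkeeping the paper records in the sentence just before the theorem.
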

\begin{proof}
By Lemma~\ref{lem:mantel}, $\Delta>0$. Theorem~\ref{thm:polynomial} and Lemma~\ref{lem:resources} give
\[
 \Phi_F\ge\max_wq(w)\ge\frac B\Delta
 \ge f+\nu+\frac m2-\frac14.
\]
Add $m-f-\nu$ and use Proposition~\ref{prop:decomposition}.
\end{proof}
The strict inequality $m>1/4$ cannot be dropped: a balanced complete bipartite weighted host has $m=1/4$ and $S=E(Q)$, so $\Phi=0$.

For a weighted host write
\[
 \MC(Q)=\max_{U\subseteq V(Q)}c\bigl(E(U,V(Q)\setminus U)\bigr),
 \qquad b(Q)=m-\MC(Q).
\]

\begin{lemma}[Triangle mass from cut deficit]\label{lem:cuttriangle}
For every weighted host,
\begin{equation}\label{eq:cuttriangle}
 \MC(Q)\ge4m^2-2\Delta,
 \qquad 2\Delta\ge b(Q)+4m(m-1/4).
\end{equation}
If $m\ge1/4-\xi$, $b(Q)\ge\rho>0$, and $0\le\xi\le\rho/2$, then $\Delta\ge\rho/4$.
\end{lemma}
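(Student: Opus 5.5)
The plan is a random-cut argument: cut along the neighborhood of a random vertex, then rearrange.

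\emph{Step 1: the random cut.} Sample $w$ according to $x$ and take $U=N(w)$. An edge $uv$ is cut exactly when precisely one of $u,v$ lies in $N(w)$. This happens with probability
\[
 D(u)+D(v)-2x\bigl(N(u)\cap N(v)\bigr).
\]
Hence the expected cut mass is
\[
 \sum_{uv\in E(Q)}x_ux_v\bigl(D(u)+D(v)\bigr)
 -2\sum_{uv\in E(Q)}x_ux_v\,x\bigl(N(u)\cap N(v)\bigr).
\]

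\emph{Step 2: evaluating the two sums.} The first sum equals $\sum_vx_vD(v)^2$, exactly as in the proof of Lemma~\ref{lem:mantel}. By Cauchy--Schwarz together with $\sum_vx_vD(v)=2m$, it is at least $4m^2$. In the second sum, each unordered triangle $uvw$ is counted once for each of its three edges, with weight $x_ux_vx_w$. So the second sum is three times the weighted triangle count, which is $\Delta$ by~\eqref{eq:triangleobservables}. Since the maximum cut is at least the expected cut, $\MC(Q)\ge4m^2-2\Delta$.

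\emph{Step 3: rearranging.} Substituting into $b(Q)=m-\MC(Q)$ gives $b(Q)\le m-4m^2+2\Delta$. Rearranging,
\[
 2\Delta\ge b(Q)+4m^2-m=b(Q)+4m(m-1/4).
\]

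\emph{Step 4: the quantitative consequence.} I split on the sign of $m-1/4$.

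If $m\ge1/4$, the correction term is nonnegative. Then $2\Delta\ge b(Q)\ge\rho$, so $\Delta\ge\rho/2\ge\rho/4$.

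If $1/4-\xi\le m<1/4$, then $0\le 1/4-m\le\xi$ and $4m<1$. Hence
\[
 4m(m-1/4)\ge-4m\xi\ge-\xi\ge-\rho/2.
\]
This gives $2\Delta\ge\rho-\rho/2=\rho/2$, that is, $\Delta\ge\rho/4$.

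The only point needing care is this last case split. A crude bound $m\le1/2$ would give only $-2\xi$, which is too weak. It is the smallness $m<1/4$ in the deficient case that supplies the factor $4m<1$.
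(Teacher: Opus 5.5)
Your proof is correct and follows the paper's own argument. The paper also averages the cut with one side $N(w)$ over $w$ drawn according to $x$, obtaining $\sum_v x_vD(v)^2-2\Delta\ge 4m^2-2\Delta$; it computes the cut mass for each fixed $w$ as $\sum_{v\in N(w)}x_vD(v)-2\tau(w)$ rather than summing cut probabilities edge by edge. It then makes the same case split, between $m\ge1/4$ and the case $4m\le1$ with $m-1/4\ge-\xi$.
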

\begin{proof}
The cut with one side $N(w)$ has mass
\[
 \sum_{v\in N(w)}x_vD(v)-2\tau(w).
\]
Averaging over $w$, the first term becomes $\sum_vx_vD(v)^2\ge4m^2$, which proves~\eqref{eq:cuttriangle}. If $m\ge1/4$, the last term of~\eqref{eq:cuttriangle} is nonnegative. Otherwise $4m\le1$ and $m-1/4\ge-\xi$, so $4m(m-1/4)\ge-\xi$. In both cases $2\Delta\ge\rho-\xi\ge\rho/2$.
\end{proof}

\begin{theorem}[Stable finite palette bound]\label{thm:stable}
Suppose $\delta>1/3$, $m\ge1/4-\xi$, $b(Q)\ge\rho>0$, and $0\le\xi\le\rho/2$. Then
\begin{equation}\label{eq:stable}
 \Phi\ge\frac32m-\frac14-\frac{2\xi}{\rho}.
\end{equation}
\end{theorem}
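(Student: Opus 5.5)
The argument is the proof of Theorem~\ref{thm:finite}, with the stable inequality~\eqref{eq:polynomialstable} in place of~\eqref{eq:polynomial} and Lemma~\ref{lem:cuttriangle} in place of Lemma~\ref{lem:mantel}.

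First, the hypotheses $m\ge1/4-\xi$, $b(Q)\ge\rho>0$ and $0\le\xi\le\rho/2$ are exactly those of Lemma~\ref{lem:cuttriangle}, so $\Delta\ge\rho/4>0$. Next, fix an optimal fractional matching in $\mathcal C_T$ and form its marks as in Section~\ref{sec:matching}, so that $M=2\nu$. Since $\delta>1/3$, the marked-host hypotheses of Theorem~\ref{thm:polynomial} hold. The partition $E(Q)=F\sqcup T$ has no triangle containing an $F$-edge, the root budgets hold by Lemma~\ref{lem:root}, and the union condition holds by Lemma~\ref{lem:union}. Applying~\eqref{eq:polynomialstable} gives
\[
 2B+\left(\frac12-m-2f-2\nu\right)\Delta\ge-\xi .
\]

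Divide by $2\Delta>0$ and use $\Delta\ge\rho/4$:
\[
 \frac B\Delta\ge f+\nu+\frac m2-\frac14-\frac{\xi}{2\Delta}
 \ge f+\nu+\frac m2-\frac14-\frac{2\xi}{\rho}.
\]
By Lemma~\ref{lem:resources}, $\Phi_F\ge\max_wq(w)$. Since $B/\Delta$ is a weighted average of the values $q(w)$ with weights $x_w\tau(w)/\Delta$, we also have $\max_wq(w)\ge B/\Delta$. Proposition~\ref{prop:decomposition} applies because $\delta>1/3$, and it gives $\Phi=\Phi_F+m-f-\nu$. Adding $m-f-\nu$ to the lower bound for $\Phi_F$ yields
\[
 \Phi\ge\frac32m-\frac14-\frac{2\xi}{\rho}.
\]

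The constant works out as follows. The error $\xi$ from~\eqref{eq:polynomialstable} is divided by $2\Delta\ge\rho/2$, which gives exactly $2\xi/\rho$. Because of this, the cruder bound $a_*\xi\le\xi$ is enough and the sharper constant $a_*$ is not needed.

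The only point that needs care is that the first inequality of~\eqref{eq:polynomialstable} requires $m\ge1/4-\xi$ with $\xi\ge0$. When $m\ge1/4$ we could use~\eqref{eq:polynomial} instead, but that changes nothing, since the bound with $\xi$ still holds. Everything else is bookkeeping inherited from the proof of Theorem~\ref{thm:finite}.
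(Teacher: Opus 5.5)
Your proposal is correct and follows the paper's proof essentially line for line: Lemma~\ref{lem:cuttriangle} gives $\Delta\ge\rho/4$, and the stable form~\eqref{eq:polynomialstable} of Theorem~\ref{thm:polynomial}, applied to the optimal-matching marks with $M=2\nu$, gives $B/\Delta\ge f+\nu+m/2-1/4-2\xi/\rho$. Combining $\Phi_F\ge\max_wq(w)\ge B/\Delta$ with Proposition~\ref{prop:decomposition} then finishes; you also spell out the hypothesis checks (root budgets, the union bound $1-\delta<2/3$, and the averaging step) that the paper leaves implicit.
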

\begin{proof}
Lemma~\ref{lem:cuttriangle} gives $\Delta\ge\rho/4$. By the stable assertion of Theorem~\ref{thm:polynomial},
\[
 \frac B\Delta\ge f+\nu+\frac m2-\frac14-\frac{\xi}{2\Delta}
 \ge f+\nu+\frac m2-\frac14-\frac{2\xi}{\rho}.
\]
Now use $\Phi_F\ge B/\Delta$ and~\eqref{eq:decomposition} as before.
\end{proof}
The error term does not depend on the number of vertices of $Q$. This is what allows us to apply the bound after deleting a small number of edges.

\section{Robust paths in the original graph}\label{sec:cleanup}
Using the regularity and triangle-removal lemmas, we find a spanning subgraph whose short walks can be replaced by simple paths of the original graph, with the same endpoints and avoiding a bounded set of prescribed vertices. These two lemmas are the only nonelementary graph-theoretic tools in the proof of Theorem~\ref{thm:main}.

For disjoint nonempty vertex sets $A,B$, write $d(A,B)=e(A,B)/(|A||B|)$. The pair is $\epsilon$-regular if
\[
 |d(A',B')-d(A,B)|\le\epsilon
\]
whenever $A'\subseteq A$, $B'\subseteq B$, $|A'|\ge\epsilon|A|$, and $|B'|\ge\epsilon|B|$.

\begin{theorem}[Equitable regularity]\label{thm:regularity}
For every $\epsilon>0$ and positive integer $t_0$, there are $T$ and $n_0$ such that every graph on $n\ge n_0$ vertices has a partition
\[
 V_0\sqcup V_1\sqcup\cdots\sqcup V_t,
 \qquad t_0\le t\le T,
\]
where $|V_0|\le\epsilon n$, the other clusters have equal size, and at most $\epsilon t^2$ unordered cluster pairs are not $\epsilon$-regular.
\end{theorem}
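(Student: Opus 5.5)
This is Szemerédi's regularity lemma in its equitable form, and I would not reprove it from scratch. Instead I would derive it from the standard energy-increment argument, or cite it directly. The plan follows the classical proof.

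First define the index (energy) of a partition $\mathcal P=\{V_0,V_1,\dots,V_t\}$ as
\[
 q(\mathcal P)=\sum_{i<j}\frac{|V_i||V_j|}{n^2}d(V_i,V_j)^2,
\]
which always lies in $[0,1/2]$. Start from an arbitrary equitable partition into $t_0$ clusters, placing the at most $t_0$ leftover vertices in $V_0$.

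Next, iterate. If more than $\epsilon t^2$ cluster pairs are irregular, each irregular pair $(V_i,V_j)$ comes with witness subsets $A'\subseteq V_i$, $B'\subseteq V_j$ of relative size at least $\epsilon$ whose density deviates by more than $\epsilon$. Take the common refinement of every cluster by all witness sets involving it; this gives at most $2^{t}$ pieces per cluster. By Cauchy--Schwarz (the defect form), the index increases by at least a fixed amount $\epsilon^5/2$ or so. Then re-equalize: chop every piece into parts of a common size $\lfloor |V_i|/4^{t}\rfloor$ and throw the remainders into $V_0$. This adds at most $n/2^{t}$ vertices to the exceptional set and can only decrease the index by a negligible amount, since the index is monotone under refinement, with the exceptional set handled by a standard bookkeeping term.

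Because the index is bounded by $1/2$, the process stops after at most about $\epsilon^{-5}$ steps. The number of clusters is then bounded by a tower-type function $T(\epsilon,t_0)$. Choosing $n_0$ large ensures that the clusters are nonempty and that $|V_0|$, whose increments sum geometrically, stays below $\epsilon n$. Also take $t_0\ge 1/\epsilon$ at the start, so that the initial leftover is small.

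The main obstacle is the bookkeeping of the exceptional set: its growth must stay below $\epsilon n$ across all iterations, while the index increment must survive re-equalization. The first thing to try is to start with $t_0$ large enough that $\sum_s 2^{-t_s}\le\epsilon/2$. In the paper, however, this would simply be quoted as the standard Szemerédi regularity lemma (as in Komlós--Simonovits), so the proposal is to cite it rather than reprove it.
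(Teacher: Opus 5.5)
Your proposal agrees with the paper, which does not prove this statement but quotes it as the standard equitable form of Szemer\'edi's regularity lemma (citing Szemer\'edi and Conlon--Fox), and your energy-increment sketch is the usual proof of that lemma. Two small bookkeeping remarks. If the vertices of $V_0$ are counted as singleton classes in the index, as in the standard proof, then re-equalization is a genuine refinement and the index does not drop at all. Also, the reason to take $t_0$ large is the tail $\sum_s 2^{-t_s}$, not the initial leftover, which has fewer than $t_0$ vertices and is absorbed by taking $n_0$ large.
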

This is the standard equitable form of Szemer\'edi's regularity lemma~\cite{Szemeredi1978}; see also~\cite{ConlonFox2012}.

\begin{theorem}[Triangle removal]\label{thm:removal}
For every $\alpha>0$ there is $\zeta>0$ such that a graph on $N$ vertices with at most $\zeta N^3$ triangles can be made triangle-free by deleting at most $\alpha N^2$ edges.
\end{theorem}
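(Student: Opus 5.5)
The plan is the standard regularity-plus-counting argument, using only Theorem~\ref{thm:regularity}. Given $\alpha>0$, fix a density threshold $d=\alpha/2$, a regularity parameter $\epsilon=\min(\alpha/8,\,d/4)$, and $t_0=\lceil 4/\alpha\rceil$. Apply Theorem~\ref{thm:regularity} with $(\epsilon,t_0)$ to obtain $T$ and $n_0$. Small graphs are handled by insisting that $\zeta<n_0^{-3}$. Then a graph on $N<n_0$ vertices with at most $\zeta N^3<1$ triangles is already triangle-free, and nothing needs to be deleted. So assume $N\ge n_0$ and take the regular partition $V_0\sqcup V_1\sqcup\cdots\sqcup V_t$, with common cluster size $s\ge(1-\epsilon)N/t\ge N/(2T)$.

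\textbf{Cleaning step.} Delete four kinds of edges:
\begin{enumerate}
\item all edges meeting $V_0$, at most $\epsilon N^2$ edges;
\item all edges inside a cluster $V_i$, at most $t s^2/2\le N^2/(2t_0)$ edges;
\item all edges between pairs that are not $\epsilon$-regular, at most $\epsilon t^2 s^2\le\epsilon N^2$ edges;
\item all edges between pairs of density less than $d$, at most $dN^2/2$ edges.
\end{enumerate}
With the choices above the total is at most $(\alpha/8+\alpha/8+\alpha/8+\alpha/4)N^2\le\alpha N^2$. Every surviving edge joins two distinct clusters forming an $\epsilon$-regular pair of density at least $d$.

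\textbf{Counting step.} Suppose a triangle survives. Its three vertices lie in three distinct clusters $A,B,C$, and the three pairs are $\epsilon$-regular with densities at least $d$ in the original graph. The counting lemma then produces many triangles there, which I prove directly from the definition of regularity.
\begin{itemize}
\item Fewer than $\epsilon s$ vertices of $A$ have fewer than $(d-\epsilon)s$ neighbours in $B$; otherwise that set would witness irregularity of $(A,B)$. The same holds for $C$.
\item So at least $(1-2\epsilon)s$ vertices $a\in A$ have neighbourhoods $N_B(a)$ and $N_C(a)$ of size at least $(d-\epsilon)s\ge\epsilon s$.
\item Regularity of $(B,C)$ then gives at least $(d-\epsilon)\,|N_B(a)|\,|N_C(a)|\ge(d-\epsilon)^3s^2$ edges between these neighbourhoods.
\end{itemize}
Hence the original graph has at least
\[
 (1-2\epsilon)(d-\epsilon)^3s^3\ge\tfrac12(d/2)^3\,(2T)^{-3}N^3
\]
triangles. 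Choose $\zeta$ strictly smaller than this constant (and than $n_0^{-3}$). Then the hypothesis of at most $\zeta N^3$ triangles is contradicted, so the cleaned graph is triangle-free.

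The only step with real content is this embedded counting lemma. The point to watch is that the neighbourhood sets must have size at least $\epsilon s$, which is guaranteed by $\epsilon\le d/4$, so that regularity of $(B,C)$ applies. Everything else is bookkeeping of the deleted edges against the constants $\epsilon,d,t_0$. Since this is the classical Ruzsa--Szemer\'edi removal lemma, the paper may simply cite it. The argument above is the self-contained route I would record, or formalize.
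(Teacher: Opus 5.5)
Your proof is correct. It is the classical Ruzsa--Szemer\'edi derivation of triangle removal from regularity, and the constants work:
\begin{itemize}
\item the deleted mass is at most $5\alpha N^2/8$;
\item the condition $\epsilon\le d/4$ makes $N_B(a)$ and $N_C(a)$ large enough to apply regularity of $(B,C)$;
\item each triangle with one vertex in each of the disjoint clusters $A,B,C$ is counted exactly once.
\end{itemize}
There is one small omission. The estimates $s\ge N/(2T)$ and $1-2\epsilon\ge1/2$ need $\epsilon\le1/4$, so you should first reduce to, say, $\alpha\le1/2$. This costs nothing, since deleting every edge removes at most $N^2/2$ edges.

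The paper gives no proof of this statement. It quotes the triangle case of the graph removal lemma and refers to Fox's proof, which avoids the regularity lemma and gives a much better (though still tower-type) dependence of $\zeta$ on $\alpha$.

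Your route uses only Theorem~\ref{thm:regularity}, which the paper already assumes. Your cleaning step is exactly the first four deletions in the proof of Lemma~\ref{lem:cleanup}. Your counting step is the same ``immediate consequence of regularity'' that the paper uses there for five-walks. So your version makes the regularity lemma the only external input, at the price of a worse $\zeta(\alpha)$. The paper says it needs only the qualitative form, so that loss does not matter here.
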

This is the triangle case of the graph removal lemma; see~\cite{Fox2011}. Only the qualitative forms of both results are needed.

\begin{lemma}[Robust path cleanup]\label{lem:cleanup}
Given $\eta>0$ and an integer $k\ge0$, for all sufficiently large $n$ every $n$-vertex graph $G$ has a spanning subgraph $G_0$ with
\[
 e(G)-e(G_0)\le\eta n^2
\]
such that the following holds. If distinct vertices $a,b$ are joined in $G_0$ by a walk of length $\ell\in\{2,3,5\}$, then for every $W\subseteq V(G)\setminus\{a,b\}$ with $|W|\le k$ there is a simple $a$--$b$ path of length exactly $\ell$ in the \emph{original graph $G$}, avoiding $W$.
\end{lemma}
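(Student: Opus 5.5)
We apply Theorem~\ref{thm:regularity} to $G$ with a small $\epsilon$ (depending on $\eta$) and $t_0\ge1/\epsilon$. We then pass to a cleaned subgraph $G_1$ by deleting four kinds of edges: edges meeting $V_0$, edges inside clusters, edges in irregular pairs, and edges in pairs of density below $d_0=\sqrt\epsilon$ or so. Each kind costs $O(\epsilon n^2+n^2/t_0+d_0n^2)$ edges, which is at most $\eta n^2/2$ for small $\epsilon$. Every edge of $G_1$ now lies in an $\epsilon$-regular pair of density at least $d_0$.

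Walks alone do not give robust paths, because a walk $a\,c\,a'$ with both ends in the same cluster pair may sit on a sparse configuration. So we also want the following typicality property: each vertex $v$ in a cluster $V_i$ has degree at least $(d-\epsilon)|V_j|$ into $V_j$, for every dense regular pair $(V_i,V_j)$. The atypical vertices in each pair number at most $\epsilon|V_i|$ per pair, so deleting their edges into that pair costs only $O(\epsilon n^2)$. After this step, if $v\in V_i$ has a neighbour in $V_j$ in $G_1$, then $v$ has at least $(d_0-\epsilon)n/T'$ neighbours there, where $T'$ is the number of clusters.

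The path-lifting step comes next. Take a walk $a=w_0,w_1,\dots,w_\ell=b$ in $G_0$. We record the cluster sequence $V_{i_0},\dots,V_{i_\ell}$, with consecutive pairs dense and regular. We then build a simple path $a,y_1,\dots,y_{\ell-1},b$ in $G$ greedily, using the standard embedding lemma for regular pairs. We choose $y_1\in N(a)\cap V_{i_1}$ such that $y_1$ has typical degree into $V_{i_2}$, and so on. At the final interior step, $y_{\ell-1}$ must be adjacent to both $y_{\ell-2}$ and $b$. Both of these neighbourhoods in $V_{i_{\ell-1}}$ have size at least $(d_0-\epsilon)|V_{i_{\ell-1}}|$. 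Regularity of $(V_{i_{\ell-2}},V_{i_{\ell-1}})$, applied to the large set $N(b)\cap V_{i_{\ell-1}}$, ensures that many choices of $y_{\ell-2}$ have $\Omega(d_0^2|V|)$ neighbours in $N(b)\cap V_{i_{\ell-1}}$. Hence we keep, at each stage, a candidate set of linear size, and avoiding $W$ together with the previously used vertices (at most $k+5$ vertices) is harmless once $n$ is large.

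The main obstacle is that repeated clusters make the argument delicate. A walk of length $2$ may go from $V_i$ to $V_j$ and back to $V_i$. A walk of length $3$ may have both endpoints in the same cluster pair, with the middle edge lying inside the same pair $(V_i,V_j)$. Regularity controls only densities between sets of size at least $\epsilon|V|$, and the endpoints' neighbourhoods are of that size. The case $\ell=3$ then reduces to finding an edge between $N(a)\cap V_{i_1}$ and $N(b)\cap V_{i_2}$, which regularity guarantees with density at least $d_0-\epsilon$. The case $\ell=5$ works similarly, by fixing the middle vertices greedily.

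If the chosen thresholds make some case fail, for example an endpoint $a$ whose walk uses the same cluster pair in both directions, we use the triangle removal lemma (Theorem~\ref{thm:removal}) as a fallback. The paper mentions this lemma, presumably to handle the case where the lifted path would need to close up in an odd way inside a sparse-triangle region. Concretely, we delete edges in triangle-poor regions so that every triangle, and every odd closed walk of length at most $5$, of $G_0$ lies in a cluster triple spanning $\Omega(n^3)$ triangles. Then the lifting argument applies uniformly to all three lengths. Taking $\eta$ to bound the sum of the deletions completes the plan.
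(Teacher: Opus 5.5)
\paragraph{Length two is not proved.} Your treatment of lengths three and five is essentially the paper's. You clean up by regularity with degree-typicality, lift a three-walk through an edge of the pair $(C_1,C_2)$ between $N_G(a)\cap C_1$ and $N_G(b)\cap C_2$, and lift a five-walk by first finding an edge between the good vertices of $C_2$ and $C_3$. The gap is length two, which your argument never actually handles.

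In your greedy scheme the last interior vertex must be a common neighbour of $y_{\ell-2}$ and $b$, and you use the freedom in choosing $y_{\ell-2}$. For $\ell=2$, however, $y_{\ell-2}=a$ is prescribed. Regularity and typicality give no control over the codegree of a prescribed pair of vertices. Write $L$ for the cluster size. Then $a$ and $b$ can each have $(d-\epsilon)L$ neighbours in $C_1$ that meet only in one vertex $c$. Changing $O(L)$ edges arranges this without affecting any regularity or typicality condition. Your cleanup then keeps the walk $acb$, and for $k\ge1$ the set $W=\{c\}$ leaves no $a$--$b$ path of length two.

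Nor can you just count the bad walks by regularity and delete them. The exceptional vertices $a$ depend on $b$ and on the middle cluster. Summing the $\epsilon L$ exceptions over $b$, $C_0$ and $C_1$ gives only a bound of order $\epsilon t n^2$, and this is at least $n^2$ with your choice $t_0\ge1/\epsilon$.

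\paragraph{The fallback and the fix.} Your triangle-removal fallback is aimed at the wrong object. A pair of low codegree need not lie in any triangle of $G$, so controlling where the triangles of $G$ sit does nothing here.

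The missing idea is to apply removal to an auxiliary graph in which the bad two-walks \emph{become} triangles. Take three copies $A,B,C$ of $V(G)$. Put copies of $E(G)$ between $A$ and $B$ and between $B$ and $C$. Add a virtual edge $u_Av_C$ whenever $u\ne v$ and $|N_G(u)\cap N_G(v)|\le k$. Every auxiliary triangle uses a virtual edge, and each virtual edge lies in at most $k$ triangles. So there are at most $kn^2=o(n^3)$ triangles, and Theorem~\ref{thm:removal} destroys them with $o(n^2)$ deletions.

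Translate these deletions back to $G$ as follows. For each deleted $AB$ or $BC$ edge, delete its underlying edge. For each deleted virtual edge $u_Av_C$, delete the at most $k$ edges $uw$ with $w\in N_G(u)\cap N_G(v)$. Then every retained two-walk has endpoints with at least $k+1$ common neighbours in $G$, so one of them lies outside $W$.

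All three path properties survive further edge deletion. You can therefore intersect this subgraph with your regularity cleanup, which is exactly how the paper proceeds.
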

\begin{proof}
We construct one cleanup for lengths three and five and another for length two. Each property survives further edge deletion, so the intersection of the two subgraphs has both.

\emph{Lengths three and five.}
Choose $d>0$ small and $t_0$ large, and then choose $\epsilon>0$ so small that
\[
 \epsilon<d/10,\qquad 4\epsilon+d+1/t_0<\eta/2.
\]
Apply Theorem~\ref{thm:regularity}, and let $L$ be the common cluster size. Delete edges incident to $V_0$, edges inside a cluster, edges in irregular pairs, and edges in regular pairs of density less than $d$. In each remaining regular pair $(A,B)$, also delete every edge incident to a vertex with fewer than $(d-\epsilon)L$ \emph{original} neighbors in the opposite cluster. There are fewer than $\epsilon L$ such vertices on either side, since a set of $\epsilon L$ of them would violate regularity together with the whole opposite cluster.

The respective edge losses are at most
\[
 \epsilon n^2,\quad \frac{n^2}{2t_0},\quad
 \epsilon n^2,\quad \frac d2 n^2,\quad \epsilon n^2,
\]
up to rounding errors that are negligible for large $n$. Their sum is less than $\eta n^2/2$. Every retained edge joins two clusters forming a regular pair of density at least $d$ in $G$, and each of its endpoints has at least $(d-\epsilon)L$ neighbors in the opposite cluster.

We use the following immediate consequence of regularity. If $(A,B)$ is $\epsilon$-regular with density $p$ and $Y\subseteq B$ has size at least $\epsilon|B|$, then fewer than $\epsilon|A|$ vertices of $A$ have fewer than $(p-\epsilon)|Y|$ neighbors in $Y$; otherwise these vertices and $Y$ would violate regularity.

For a retained three-walk $av_1v_2b$, let $C_1,C_2$ be the clusters of $v_1,v_2$. The sets
\[
 L_1=N_G(a)\cap C_1,\qquad L_2=N_G(b)\cap C_2
\]
have size at least $(d-\epsilon)L$. Remove $W\cup\{a,b\}$ from them. For large $n$ the remaining sets still have size at least $\epsilon L$, so regularity of $(C_1,C_2)$ supplies an edge $xy$ between them. The path $axyb$ is simple, has length three, and avoids $W$.

For a retained five-walk $av_1v_2v_3v_4b$, let $C_i$ be the cluster of $v_i$, and put $L_1=N_G(a)\cap C_1$ and $L_4=N_G(b)\cap C_4$, both of size at least $(d-\epsilon)L$. All but $\epsilon L$ vertices of $C_2$ have at least $(d-\epsilon)|L_1|$ neighbors in $L_1$, and all but $\epsilon L$ vertices of $C_3$ have at least $(d-\epsilon)|L_4|$ neighbors in $L_4$. After $W\cup\{a,b\}$ is excluded, these two sets of good vertices are still large enough for regularity of $(C_2,C_3)$ to give an edge $yz$ between them. Choose
\[
 x\in N_G(y)\cap L_1,\qquad z'\in N_G(z)\cap L_4,
\]
at each step excluding $W$ and every vertex already chosen. Each available neighbor set initially has size at least $(d-\epsilon)^2L$, which exceeds $k+6$ for large $n$. Then $axyzz'b$ is the required simple five-path. Consecutive clusters are distinct. Nonconsecutive clusters, and vertices of the input walk, may coincide, but the explicit exclusions keep the vertices of the new path distinct.

\emph{Length two.}
Take three disjoint copies $A,B,C$ of $V(G)$. Between $A$ and $B$, and between $B$ and $C$, put copies of the adjacency relation of $G$. Add a virtual edge $u_Av_C$ precisely when $u\ne v$ and the original codegree $|N_G(u)\cap N_G(v)|$ is at most $k$. Each virtual edge lies in at most $k$ auxiliary triangles, and every auxiliary triangle contains a virtual edge, so there are at most $kn^2=o((3n)^3)$ auxiliary triangles.

By Theorem~\ref{thm:removal}, for large $n$ these triangles can be destroyed by deleting at most $\eta n^2/(2(k+1))$ auxiliary edges. When an $AB$ or $BC$ edge is deleted, delete its underlying edge of $G$. For each deleted virtual edge $u_Av_C$, delete $uw$ for every $w\in N_G(u)\cap N_G(v)$, at a cost of at most $k$ edges of $G$. The total loss is at most $\eta n^2/2$.

No retained two-walk $uwv$ with distinct endpoints can have original codegree at most $k$. Such a walk would form an auxiliary triangle $u_Aw_Bv_C$. If an $AB$ or $BC$ edge of that triangle was deleted, its underlying edge of $G$ was deleted; if its virtual edge was deleted, the edge $uw$ was deleted explicitly. Either way the walk was not retained. Thus the endpoints have at least $k+1$ common neighbors in $G$, and one of them lies outside $W$. It is not an endpoint because $G$ is simple, so it gives the required two-path.
\end{proof}
The threshold on $n$ in Lemma~\ref{lem:cleanup} depends only on $\eta$ and $k$.

\begin{lemma}[Degree pruning]\label{lem:prune}
Suppose $\delta(G)\ge(1/3+\gamma)n$, where $\gamma>0$. For every sufficiently small fixed $\eta>0$, the cleanup above can be followed by vertex deletion to give a subgraph $H$ of order $h$ such that, writing $\beta=\gamma/4$ and $\ell=e(G)-e(H)$,
\begin{align}
 h&\ge(1-2\eta/\beta)n,\label{eq:pruneorder}\\
 \ell&\le(\eta+2\eta/\beta)n^2,\label{eq:pruneloss}\\
 \delta(H)&>(1/3+\gamma/2)h.\label{eq:prunedegree}
\end{align}
The subgraph $H$ retains the path property of Lemma~\ref{lem:cleanup}, with the paths taken in $G$.
\end{lemma}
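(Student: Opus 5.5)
Apply Lemma~\ref{lem:cleanup} with the given $\eta$ and $k$ to obtain $G_0$ with $e(G)-e(G_0)\le\eta n^2$. Then prune greedily: while some vertex of the current graph $G_i$ (of order $h_i$) has degree at most $(1/3+\gamma/2)h_i$, delete it. Let $H$ be the graph where this stops. Condition~\eqref{eq:prunedegree} then holds by construction. The path property is inherited, because $H$ is a subgraph of $G_0$. Any walk of length $2$, $3$ or $5$ in $H$ is a walk in $G_0$, and the paths supplied by Lemma~\ref{lem:cleanup} lie in $G$ anyway. So the real content is the order bound~\eqref{eq:pruneorder}; the loss bound~\eqref{eq:pruneloss} follows from it.

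\textbf{Counting deleted vertices.} Let $r$ be the number of deleted vertices, and suppose for contradiction that $r>2\eta n/\beta$. Look at the first $s=\lceil 2\eta n/\beta\rceil$ deletions; for small $\eta$ we have $s\le n/2$. When the $j$th vertex $v$ is deleted, the current order satisfies $h_j\le n$. So $v$ has at most $(1/3+\gamma/2)n$ neighbours in $G_j$. Its $G$-degree is at least $(1/3+\gamma)n$, and at most $j$ of its $G$-neighbours have been deleted so far. Hence at least
\[
(1/3+\gamma)n-(1/3+\gamma/2)n-j\ge \gamma n/2-s
\]
of the $G$-edges at $v$ are missing from $G_0$ and go to vertices not yet deleted. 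These missing edges are distinct for distinct deleted $v$: each such edge joins $v$ to a vertex that survives longer, so it is charged only once. Since $s\le 2\eta n/\beta+1=8\eta n/\gamma+1$, we have $s\le\gamma n/4$ once $\eta$ is small compared with $\gamma^2$. This is where the hypothesis that $\eta$ is sufficiently small is used. Summing over the $s$ deletions gives
\[
e(G)-e(G_0)\ge s\gamma n/4=s\beta n\ge 2\eta n^2>\eta n^2,
\]
a contradiction. Therefore $r\le 2\eta n/\beta$, which is~\eqref{eq:pruneorder}.

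\textbf{Edge loss.} The edges lost are those deleted in the cleanup, at most $\eta n^2$, plus those incident to deleted vertices, at most $rn\le(2\eta/\beta)n^2$. This gives~\eqref{eq:pruneloss}.

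The main obstacle is this counting argument. The pruning threshold is relative to the shrinking order $h_i$, while the degree hypothesis is relative to $n$. The danger is a cascade, in which deletions lower other degrees and trigger further deletions. Bounding the cascade by the first $s$ deletions, and charging each one to $G$-edges already destroyed by the cleanup, handles it. The price is the requirement that $\eta\ll\gamma^2$, and the constant $\beta=\gamma/4$ leaves exactly the needed margin, $\gamma/2-\gamma/4$.
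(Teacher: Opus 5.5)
Your proof is correct, but it takes a different route from the paper's. The paper does not prune iteratively. It deletes, in one step, every vertex that lost more than $\beta n$ incident edges in the cleanup. Double counting the at most $2\eta n^2$ lost endpoint incidences shows there are at most $2\eta n/\beta$ such vertices. Each survivor keeps its original degree minus at most $\beta n$ cleanup losses and minus at most $2\eta n/\beta$ edges to deleted vertices, so its degree in $H$ is at least $(1/3+\gamma-\beta-2\eta/\beta)n$. This exceeds $(1/3+\gamma/2)n\ge(1/3+\gamma/2)h$ once $\eta<\beta\gamma/8$.

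The deletion criterion there is fixed in advance by the cleanup and does not refer to the shrinking graph. So no cascade can occur, and the minimum-degree bound is a direct computation.

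You instead delete by current degree. This makes \eqref{eq:prunedegree} automatic and moves the work into controlling the cascade. Your charging argument does this correctly. Each of the first $s$ deleted vertices accounts for at least $\gamma n/4$ cleanup-missing edges to vertices still present. Each such edge is charged only to its earlier-deleted endpoint.

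Both arguments need $\eta=O(\gamma^2)$ and give the same bounds \eqref{eq:pruneorder}--\eqref{eq:pruneloss}. The paper's version is shorter and avoids the ordering argument. Yours removes only vertices that actually violate the degree condition, and needs $n$ large only to absorb the ceiling in $s$.
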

\begin{proof}
Delete the vertices that lost more than $\beta n$ incident edges in the cleanup; there are at most $2\eta n/\beta$ of them. The remaining induced subgraph of $G_0$ satisfies the bounds on order and total edge loss above, where the loss includes all edges at removed vertices. Its minimum degree is at least
\[
 (1/3+\gamma-\beta-2\eta/\beta)n.
\]
Choose $\eta<\beta\gamma/8$. Then this exceeds $(1/3+\gamma/2)n$, and hence $(1/3+\gamma/2)h$. Retained walks are still walks of $G_0$, and their lifted paths need only lie in $G$, not in $H$.
\end{proof}

\section{From colorings to the finite palette bound}\label{sec:source}
Write $r_7(G)=r_{C_7}(G)$. A graph of order $n$ is \emph{eligible} if $e(G)\ge\lfloor n^2/4\rfloor+1$, or equivalently $e(G)>n^2/4$. For an ordinary graph put
\[
 b(G)=e(G)-\max_{U\subseteq V(G)}e(U,V(G)\setminus U),
 \qquad P(n,e)=\frac32e-\frac{n^2}{4}.
\]
For a graph on $h$ vertices with uniform weights, the weighted deficit is $b(G)/h^2$.

\begin{lemma}[Color classes give compatible patterns]\label{lem:sourcepatterns}
Let $G$ have a valid rainbow-$C_7$ coloring using $r$ colors. Let $H\subseteq G$ have $h>0$ vertices and satisfy the path property of Lemma~\ref{lem:cleanup} with $k=5$, with all lifted paths in $G$. Let $Z$ be the set of vertices of $H$ lying in no $H$-triangle and $S=E(H[Z])$. Then every nonempty restriction of an original color class to $E(H)\setminus S$ is a matching and a pattern, with compatibility computed in $H$. With uniform vertex weights $1/h$,
\begin{equation}\label{eq:sourcecover}
 r/h^2\ge\Phi(H).
\end{equation}
\end{lemma}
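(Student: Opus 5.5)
The plan is to prove two forbidden configurations for a single color class by lifting walks of $H$ to simple paths of $G$ with Lemma~\ref{lem:cleanup}. Each configuration would yield a non-rainbow $C_7$ in $G$, which is impossible. The cover bound~\eqref{eq:sourcecover} then follows by counting color classes.

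\emph{Matching.} Suppose two edges $ua,ub\in E(H)\setminus S$ with $a\ne b$ have the same color. I will exhibit a walk of length $5$ from $a$ to $b$ in $H$. Lifting it with $W=\{u\}$ gives a simple $a$--$b$ path of length five in $G$ avoiding $u$. Together with $ua$ and $ub$ this path closes a $C_7$ in $G$ that contains two edges of the same color, a contradiction. To build the walk, note that $ua$ and $ub$ are not both in $S$. Hence one of them is triangular, or lies in $J$ and so has an endpoint in some $H$-triangle; in either case one of $u,a,b$ lies in a triangle of $H$.
\begin{itemize}
\item If $ua$ lies in a triangle $uat$, use the walk $a\,t\,u\,a\,u\,b$.
\item If $u$ lies in a triangle $upq$, use the walk $a\,u\,p\,q\,u\,b$.
\item If $a$ lies in a triangle $apq$, use the walk $a\,p\,q\,a\,u\,b$.
\end{itemize}
The remaining cases are symmetric, with the roles of $a$ and $b$ exchanged. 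Each walk has length $5$ and distinct endpoints $a\ne b$, so Lemma~\ref{lem:cleanup} applies.

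\emph{Compatibility.} Now let $ab,cd$ be distinct same-colored edges of the restricted class. By the matching property they are vertex-disjoint, so $a\ne c$ and $b\ne d$ under any orientation. Suppose, for contradiction, that $(A_H^2)_{ac}(A_H^3)_{bd}>0$. Lift the two-walk with $W=\{b,d\}$ to get a path $a\,x\,c$ in $G$. Then lift the three-walk with $W=\{a,c,x\}$, which has size at most $5$, to get a path $b\,y\,z\,d$ avoiding these vertices. The closed walk $a\,x\,c\,d\,z\,y\,b\,a$ has seven distinct vertices, so it is a $C_7$ in $G$. It contains $ab$ and $cd$, which share a color, a contradiction. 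Hence every nonempty restriction is a pairwise compatible set, that is, a pattern in $H$.

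\emph{Cover.} Give each pattern $p$ the amount $a_p=N_p/h^2$, where $N_p$ is the number of colors whose restriction to $E(H)\setminus S$ equals $p$. Every edge $e\in E(H)\setminus S$ carries exactly one color, so $\sum_{p\ni e}a_p=1/h^2=c(e)$. This is an exact cover in the sense of~\eqref{eq:exactcover}, and its cost is at most $r/h^2$. Therefore $\Phi(H)\le r/h^2$.

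The main obstacle is the matching step. Without it the two same-colored edges in the compatibility argument could share a vertex, and the walk lifting would break down. The case analysis above must cover every possible position of a triangle, including edges of $J$ whose triangle touches only one endpoint.
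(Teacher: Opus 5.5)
Your proposal is correct and follows the paper's proof essentially step for step. It uses the same five-walks through a triangle at the center or at either leaf, lifted while avoiding the center, then the same sequential lifting (the two-walk avoiding $b,d$, then the three-walk avoiding $a,x,c$), and finally the same exact cover by the restricted color classes, which has cost at most $r/h^2$.
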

\begin{proof}
Suppose two retained edges $uv,uw$ have the same color, with $v\ne w$. If $u$ lies on an $H$-triangle $uab$, there is a retained five-walk $vuabuw$. If instead $v$ or $w$ lies on a triangle, the corresponding five-walk is $vabvuw$ or $vuwabw$, respectively. In each case the endpoints are the distinct vertices $v,w$. Lift the walk to a simple path of length five in $G$ avoiding $u$. Together with $uv$ and $uw$ it forms a simple $C_7$ with two edges of the same color, a contradiction. Hence every monochromatic retained wedge has all three vertices in $Z$, and the restriction of a color class to $E(H)\setminus S$ is a matching.

Take two edges $ab,cd$ of this matching. If they are incompatible for some orientation, there is a two-walk from $a$ to $c$ and a three-walk from $b$ to $d$ in $H$. Lift the first walk to a path in $G$ avoiding $b,d$, and then lift the second to a path in $G$ avoiding the three vertices of the first. The two paths are vertex-disjoint, and together with $ab$ and $cd$ they form a simple seven-cycle containing both edges of the same color, again a contradiction. This proves compatibility in every orientation.

Assign amount $1/h^2$ to each nonempty restricted color class, adding the amounts when two classes give the same pattern. Every edge outside $S$ belongs to exactly one original color class, so its capacity $1/h^2$ is covered exactly. The total cost is at most $r/h^2$, which proves~\eqref{eq:sourcecover}.
\end{proof}
The coloring of $G$ itself is never modified; the edges of $S$ are left out only when the fractional cover is formed.

\begin{proposition}[The far-cut estimate]\label{prop:farcut}
For every $\gamma,\rho,t>0$, all sufficiently large eligible graphs $G$ on $n$ vertices with
\[
 \delta(G)\ge(1/3+\gamma)n,\qquad b(G)\ge\rho n^2
\]
satisfy
\begin{equation}\label{eq:farcut}
 r_7(G)\ge P(n,e(G))-tn^2.
\end{equation}
\end{proposition}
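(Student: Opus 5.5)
Apply the cleanup and pruning machinery to $G$, then invoke the stable finite palette bound (Theorem~\ref{thm:stable}) on the pruned graph with uniform weights, and finally translate back through Lemma~\ref{lem:sourcepatterns}. Concretely, given $\gamma,\rho,t$, fix a small $\eta>0$ (how small is specified below), run Lemma~\ref{lem:cleanup} with $k=5$, and then Lemma~\ref{lem:prune} with $\beta=\gamma/4$. This produces $H\subseteq G$ of order $h\ge(1-2\eta/\beta)n$ with $\ell=e(G)-e(H)\le(\eta+2\eta/\beta)n^2=:\eta'n^2$ and $\delta(H)>(1/3+\gamma/2)h$. The subgraph $H$ also keeps the path property with paths in $G$. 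With weights $1/h$, the weighted host $Q=H$ therefore has $\delta>1/3$.

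Next I check the hypotheses of Theorem~\ref{thm:stable}.

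\emph{Density.} Since $e(G)>n^2/4$ and $h\le n$, we have
\[
 m=e(H)/h^2\ge \bigl(n^2/4-\eta'n^2\bigr)/h^2\ge 1/4-\eta'(n/h)^2 .
\]
So we may take $\xi=2\eta'$ once $\eta$ is small enough that $(n/h)^2\le2$.

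\emph{Cut deficit.} Any cut of $H$ extends to a cut of $G$ by placing the deleted vertices arbitrarily. The max cut of $G$ is then at least the max cut of $H$, and $e(G)\le e(H)+\ell$. Hence
\[
 b(H)\ge b(G)-\ell\ge(\rho-\eta')n^2,
\]
and after dividing by $h^2\le n^2$ this gives $b(Q)\ge\rho/2$ once $\eta'\le\rho/2$. Set $\rho_Q=\rho/2$. It remains to arrange $\xi=2\eta'\le\rho_Q/2=\rho/4$, which holds for small $\eta$.

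Theorem~\ref{thm:stable} now gives
\[
 \Phi(H)\ge\tfrac32 m-\tfrac14-4\xi/\rho .
\]
By Lemma~\ref{lem:sourcepatterns}, applied with the given coloring of $G$ using $r=r_7(G)$ colors,
\[
 r\ge h^2\Phi(H)\ge\tfrac32e(H)-\tfrac{h^2}{4}-\tfrac{4\xi}{\rho}h^2 .
\]
Using $e(H)\ge e(G)-\eta'n^2$ and $h\le n$, we get
\[
 r\ge\tfrac32e(G)-\tfrac{n^2}{4}-\Bigl(\tfrac32\eta'+\tfrac{8\eta'}{\rho}\Bigr)n^2 .
\]
Choose $\eta$ (hence $\eta'$) so that the bracket is at most $t$; all the required smallness conditions depend only on $\gamma,\rho,t$. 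Taking $n$ larger than the threshold of Lemma~\ref{lem:cleanup} for this $\eta$ and $k=5$ then yields~\eqref{eq:farcut}.

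The main point needing care is the direction of the inequalities when passing between $G$ and $H$. The lower bound $h^2/4\le n^2/4$ is favorable in the palette estimate, but the density estimate $m\ge1/4-\xi$ needs $h$ close to $n$, which is why the order bound~\eqref{eq:pruneorder} is used there. The deficit transfer also needs the lost edges, including those at deleted vertices, to be counted in $\ell$, which Lemma~\ref{lem:prune} already does. I expect the rest to be bookkeeping of constants.
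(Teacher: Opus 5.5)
Your proposal is correct and follows the paper's proof essentially step for step. Both arguments run cleanup and pruning with $k=5$, transfer the deficit via $b(H)\ge b(G)-\ell$, apply Theorem~\ref{thm:stable} with deficit parameter $\rho/2$, and finish with Lemma~\ref{lem:sourcepatterns}. The only difference is bookkeeping: the paper sets $\xi=\ell/h^2$, so the error is exactly $4\ell/\rho$, and then uses $P(n,e(G))-P(h,e(H))\le\frac32\ell$, whereas you bound $\xi$ by the constant $2\eta'$ using $(n/h)^2\le 2$.
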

\begin{proof}
Fix a valid coloring with $r$ colors. Apply Lemmas~\ref{lem:cleanup} and~\ref{lem:prune} with $k=5$ and with $\eta>0$ to be chosen in terms of $\gamma,\rho,t$. Let $H$ be the resulting subgraph, of order $h$, and let $\ell=e(G)-e(H)$, which counts every lost edge. Give the vertices of $H$ uniform weight $1/h$ and put $\xi=\ell/h^2$. Eligibility gives
\[
 m_H=\frac{e(H)}{h^2}\ge\frac14-\xi.
\]
Every cut of $H$ extends to a cut of $G$, so $b(H)\ge b(G)-\ell$. Choose $\eta$ small enough that
\[
 \ell\le\rho n^2/2,\qquad \xi\le\rho/4,
\]
which is possible by~\eqref{eq:pruneorder}--\eqref{eq:pruneloss}. Then the normalized deficit of $H$ is at least $\rho/2$, and its weighted minimum degree is greater than $1/3$. Applying Theorem~\ref{thm:stable} with deficit parameter $\rho/2$, and then Lemma~\ref{lem:sourcepatterns}, we obtain
\[
 r\ge P(h,e(H))-\frac{4\ell}{\rho}.
\]
Since
\[
 P(n,e(G))-P(h,e(H))
 =\frac32\ell-\frac{n^2-h^2}{4}\le\frac32\ell,
\]
it follows that
\begin{equation}\label{eq:sourceerror}
 r\ge P(n,e(G))-\left(\frac32+\frac4\rho\right)\ell.
\end{equation}
Decrease $\eta$ so that the last error term is at most $tn^2$, and only then take $n$ sufficiently large. These choices depend only on $\gamma,\rho,t$, and not on $G$ or its coloring.
\end{proof}

\section{The near-cut case}\label{sec:nearcut}
For graphs close to a bipartite cut we bound the number of colors directly. The strict inequality $e(G)>n^2/4$ provides an internal edge, through which we route seven-cycles containing two prescribed crossing edges.

\begin{lemma}[Near-cut seven-cycle clique]\label{lem:nearcut}
Let $0<\varepsilon\le1/16$ and $n\ge64\varepsilon^{-3}$. Suppose $G$ is eligible and has a cut with at least $(1/4-\varepsilon^3/8)n^2$ edges. Then every rainbow-$C_7$ coloring of $G$ uses at least $(1/8-\varepsilon)n^2$ colors.
\end{lemma}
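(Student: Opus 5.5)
Fix a cut $(X,Y)$ with $e(X,Y)\ge(1/4-\varepsilon^3/8)n^2$. Since $e(X,Y)\le|X||Y|$, both sides have size $n/2+O(\varepsilon^{3/2}n)$, and the bipartite complement $\overline{B}$ (non-edges between $X$ and $Y$) has at most $\varepsilon^3n^2/8+O(\cdot)$ pairs. Because $e(G)>n^2/4\ge|X||Y|$, some edge lies inside a side, say $e_0=x_1x_2\subseteq X$. The plan is to exhibit a set $\mathcal E$ of at least $(1/8-\varepsilon)n^2$ crossing edges that are pairwise in a common $C_7$. Such a set is a clique in the conflict graph, so its edges receive distinct colors.

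\textbf{Step 1 (typical vertices).} Call a vertex \emph{bad} if it has more than $\varepsilon|{\rm opposite\ side}|/4$ non-neighbours across the cut. Counting missing cross pairs shows there are at most $O(\varepsilon^2 n)$ bad vertices. Good vertices on opposite sides have $(1-\varepsilon/2)n/2$-ish common structure: any two good vertices in $Y$ share many common neighbours in $X$, and similarly in $X$. We also want $x_1,x_2$ to have many neighbours in $Y$. If the original internal edge fails this, we instead use the fact that the internal edges number more than the missing cross pairs. Then either some internal edge has both endpoints of large cross-degree, or we move a low-degree vertex across the cut to improve it. To make this clean, I would first choose the cut to be a maximum cut, so every vertex has at least half its degree across.

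\textbf{Step 2 (the clique).} Let $\mathcal E$ be the set of crossing edges $ab$ ($a\in X$, $b\in Y$) with both endpoints good and distinct from $x_1,x_2$. This discards $O(\varepsilon^2n^2)$ crossing edges, so $|\mathcal E|\ge(1/4-\varepsilon)n^2\cdot$\ldots. But a clique of crossing edges can only give $n^2/4$ if all pairs of crossing edges lie on a common $C_7$. A $C_7$ uses an odd number of internal edges, so it must use $e_0$. For $ab,cd\in\mathcal E$ we build the cycle $x_1\,y\,a\,b\,\ldots$ routing through $e_0$ with paths of alternating sides. Because the cycle has length seven and one internal edge, the two crossing edges must be positioned with the correct parity. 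This means that for each pair we must choose the orientation of traversal. Both orientations are available, since we may enter either edge from either endpoint: one segment of even length joins $x_2$ to the $X$-endpoint of one edge, and the other segment connects across. Goodness supplies the connecting vertices, and we choose them distinct greedily since there are $\Omega(n)$ candidates.

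\textbf{Main obstacle.} The count $1/8$ rather than $1/4$ suggests that not all crossing edges are pairwise conflicting. The parity constraint of one internal edge likely forces a bipartition of crossing edges, so only about half of them can be made into a clique. The hardest step is identifying that parity obstruction exactly: presumably the cycle must be $x_1x_2$ plus crossing edges in positions compatible with length $7$. I would choose $\mathcal E$ as the crossing edges incident to a half of $X$ or $Y$ (split by distance parity from $e_0$), giving $(1/8-\varepsilon)n^2$ edges. I would then verify with explicit short paths, using Step 1's common-neighbour bounds, that every pair lies on a $C_7$ through $e_0$. The inequality $n\ge64\varepsilon^{-3}$ absorbs the rounding and exclusion losses.
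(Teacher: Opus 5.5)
Your framework (a maximum cut, typical vertices, seven-cycles through one internal edge, a clique of crossing edges in the conflict graph) is the right one, but the two steps that carry the proof are missing.

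First, you never show that a usable internal edge exists. Your dichotomy (``some internal edge has both endpoints of large cross-degree, or we move a low-degree vertex across'') is not an argument. In a maximum cut no vertex can be moved profitably, and two typical endpoints cannot be guaranteed. For instance, with $4\mid n$, take $K_{n/2,n/2}$ on $A\cup B$ and let one vertex $v\in B$ lose $n/4-1$ crossing edges and gain $n/4$ neighbours inside $B$. This graph satisfies the hypotheses and $(A,B)$ is its maximum cut, yet every internal edge contains $v$, which sees only about half of $A$. What is true, and what the paper proves, is that some internal edge $uv$, say inside $B$, has $u$ typical and $d_A(v)>\abs{A}/2-s$, where $s=\abs{W}$ counts the atypical vertices. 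Write $I$ for the number of internal edges, $D$ for the number of missing crossing pairs, and $\operatorname{def}(v)$ for the number of missing crossing neighbours of $v$. If no such edge exists, every internal edge meets $W$. For each $v\in W$ one gets $\operatorname{def}(v)-d_{\rm int}(v)\ge2s$: if $v$ has a typical internal neighbour this follows from local optimality $d_{\rm int}(v)\le d_{\rm cr}(v)$, and otherwise from $d_{\rm int}(v)<s$. Hence $I\le D+\abs{W\cap A}\abs{W\cap B}-2s^2\le D$, contradicting $I>D$. This is exactly where $e(G)>n^2/4$ enters.

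Second, you do not identify the clique, and your explanation of the factor $1/2$ is off. It is not a parity phenomenon, and splitting by distance parity from $e_0$ does not capture it; the constraint is adjacency. A seven-cycle whose only internal edge is $uv$ has the form $v\,a_1\,b_1\,a_2\,b_2\,a_3\,u\,v$ with $a_1\in N_A(v)$. Suppose two disjoint selected edges $xy,zw$ (with $x,z\in A$) have $x,z\notin N_A(v)$. Then any such cycle must contain $xw$ or $zy$, and goodness guarantees neither, so your Step~2 does not go through as stated. The paper restricts to crossing edges between $X=(N_A(u)\cap N_A(v))\setminus W$ and $B_0=B\setminus(W\cup\{u,v\})$. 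Since $u$ is typical and $d_A(v)\gtrsim\abs{A}/2$, one has $\abs{X}\approx n/4$. The count is then $\abs{X}(\abs{B_0}-\tau n)\ge(1/8-9\tau/8)n^2$ with $\tau=\varepsilon/2$, and this loss of about half of $A$ is where $1/8$ comes from. Every pair is then placed on an explicit cycle:
\begin{itemize}
\item $v\,x\,y\,c\,w\,z\,u\,v$ for disjoint edges $xy,zw$;
\item $v\,c\,y\,x\,w\,d\,u\,v$ for edges sharing an $A$-endpoint $x$;
\item $v\,x\,y\,z\,w\,c\,u\,v$ for edges sharing a $B$-endpoint $y$.
\end{itemize}
The connectors $c,d,w$ are taken from common neighbourhoods of typical vertices, avoiding the vertices already used.
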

\begin{proof}
Put $\tau=\varepsilon/2$, so that $\tau\le1/32$ and $n\ge8\tau^{-3}$. Take a maximum cut $(A,B)$, with $|A|=a$, $|B|=b$, and $M_0$ crossing edges. Let $D=ab-M_0$ be the number of missing crossing edges and $I=e(G)-M_0$ the number of internal edges. The cut assumption and eligibility imply
\begin{equation}\label{eq:nearcutbasic}
 |a-n/2|,|b-n/2|\le\tau^{3/2}n,\qquad
 D\le\tau^3n^2,\qquad I>D,
\end{equation}
where the last inequality holds because $I>n^2/4-M_0\ge D$.

Call a vertex \emph{typical} if it misses at most $\tau n$ vertices on the opposite side. Let $W$ be the set of remaining vertices and $s=|W|$. Counting the two endpoints of each missing edge gives
\begin{equation}\label{eq:badvertices}
 s\le2\tau^2n.
\end{equation}
We first find an internal edge $uv$, say in $B$, such that $u$ is typical and
\begin{equation}\label{eq:trigger}
 d_A(v)>a/2-s.
\end{equation}
Suppose no such edge exists on either side. Then no internal edge has two typical endpoints, since by~\eqref{eq:nearcutbasic}--\eqref{eq:badvertices} a typical vertex has crossing degree greater than half the opposite side minus $s$.

For $v\in W$, write $d_{\rm int}(v)$ and $d_{\rm cr}(v)$ for its internal and crossing degrees and $\operatorname{def}(v)$ for its number of missing crossing neighbors. Local optimality of a maximum cut gives $d_{\rm int}(v)\le d_{\rm cr}(v)$. If $v$ has a typical internal neighbor, the assumed failure of~\eqref{eq:trigger} gives
\[
 d_{\rm cr}(v)\le |\text{opposite side}|/2-s,
 \qquad \operatorname{def}(v)-d_{\rm int}(v)\ge2s.
\]
If it has no typical internal neighbor, then $d_{\rm int}(v)\le s-1$, whereas $\operatorname{def}(v)>\tau n\ge16s$, which gives the same inequality. Every internal edge meets $W$. If $s>0$, it follows that
\begin{align*}
 I&\le\sum_{v\in W}d_{\rm int}(v)
 \le\sum_{v\in W}\operatorname{def}(v)-2s^2\\
 &\le D+|W\cap A||W\cap B|-2s^2
 \le D-\frac74s^2<D,
\end{align*}
contradicting~\eqref{eq:nearcutbasic}; in the third step, a missing crossing edge with both endpoints in $W$ is the only way the deficit sum can exceed $D$. If $s=0$, the absence of internal edges between typical vertices gives $I=0$, which is also impossible. Thus the required internal edge $uv$ exists.

Set
\[
 X=(N_A(u)\cap N_A(v))\setminus W,
 \qquad B_0=B\setminus(W\cup\{u,v\}).
\]
Using typicality of $u$,~\eqref{eq:trigger}, and the preceding bounds, we obtain
\begin{align}
 |X|&>\left(\frac14-\tau-\frac12\tau^{3/2}-4\tau^2\right)n
 \ge\left(\frac14-\frac32\tau\right)n,\label{eq:nearX}\\
 |B_0|-\tau n
 &\ge\left(\frac12-\tau-\tau^{3/2}-2\tau^2\right)n-2
 \ge\left(\frac12-\frac32\tau\right)n.\label{eq:nearB}
\end{align}
The final inequalities use
\[
 \frac{\sqrt\tau}{2}+4\tau\le\frac12,
 \qquad \sqrt\tau+2\tau+\frac{2}{\tau n}\le\frac12,
\]
which hold in the stated range. All vertices of $X$ are typical, so the set $E(X,B_0)$ of selected edges has size at least
\begin{equation}\label{eq:nearcount}
 |X|(|B_0|-\tau n)
 \ge\left(\frac18-\frac98\tau\right)n^2
 \ge\left(\frac18-\varepsilon\right)n^2.
\end{equation}
We show that any two distinct selected edges lie on a common simple $C_7$.

First take disjoint edges $xy,zw$, where $x,z\in X$ and $y,w\in B_0$. Choose $c\in N_A(y)\cap N_A(w)$ outside $\{x,z\}$, and use the cycle
\begin{equation}\label{eq:cycle-disjoint}
 v\,x\,y\,c\,w\,z\,u\,v.
\end{equation}
For two selected edges $xy,xw$ sharing their endpoint in $A$, choose $c\in N_A(v)\cap N_A(y)$ outside $\{x\}$ and then $d\in N_A(u)\cap N_A(w)$ outside $\{x,c\}$, and use
\begin{equation}\label{eq:cycle-shareA}
 v\,c\,y\,x\,w\,d\,u\,v.
\end{equation}
For two edges $xy,zy$ sharing their endpoint in $B$, choose $w\in N(z)\cap B_0$ outside $\{y\}$ and $c\in N_A(w)\cap N_A(u)$ outside $\{x,z\}$, and use
\begin{equation}\label{eq:cycle-shareB}
 v\,x\,y\,z\,w\,c\,u\,v.
\end{equation}
These choices are possible. Two typical vertices of $B$ have at least $a-2\tau n>2$ common neighbors in $A$. The neighborhood of a typical vertex of $B$ meets that of either endpoint of $uv$ in more than $a/2-s-\tau n>2$ vertices. Finally, $|B_0|-\tau n>1$. These inequalities follow from~\eqref{eq:nearcutbasic}--\eqref{eq:nearB} and $n\ge8\tau^{-3}$.

Each displayed cycle has three distinct vertices in $A$ and four distinct vertices in $B$; the latter are distinct because $B_0$ excludes $u,v$ and each new choice excludes the vertices already selected. Thus the cycles are simple. The selected edges therefore form a clique in the conflict graph and receive distinct colors, and~\eqref{eq:nearcount} completes the proof.
\end{proof}

\begin{proposition}[Potential bound at large minimum degree]\label{prop:terminal}
For every $\gamma,t>0$ there is $n_0$ such that every eligible graph $G$ of order $n\ge n_0$ with $\delta(G)\ge(1/3+\gamma)n$ satisfies
\begin{equation}\label{eq:terminal}
 r_7(G)\ge P(n,e(G))-tn^2.
\end{equation}
\end{proposition}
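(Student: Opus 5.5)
Split according to the cut deficit $b(G)$, with one threshold chosen so that the two earlier results meet. Given $\gamma,t>0$, first put $\varepsilon=\min\{1/16,\,t/2\}$ and $\rho=\varepsilon^3/8$.

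\emph{Far-cut case: $b(G)\ge\rho n^2$.} Proposition~\ref{prop:farcut}, applied with parameters $\gamma,\rho,t$, gives~\eqref{eq:terminal} directly once $n$ is larger than the threshold it supplies.

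\emph{Near-cut case: $b(G)<\rho n^2$.} A maximum cut then has more than $e(G)-\rho n^2>(1/4-\varepsilon^3/8)n^2$ edges, using eligibility. For $n\ge64\varepsilon^{-3}$, Lemma~\ref{lem:nearcut} gives $r_7(G)\ge(1/8-\varepsilon)n^2$. It remains to compare this with $P(n,e(G))$. Since $b(G)<\rho n^2$ and no cut has more than $n^2/4$ edges,
\[
 e(G)<n^2/4+\rho n^2,\qquad\text{so}\qquad
 P(n,e(G))<\frac18n^2+\frac32\rho n^2.
\]
Hence
\[
 r_7(G)\ge P(n,e(G))-\Bigl(\varepsilon+\frac32\rho\Bigr)n^2
 \ge P(n,e(G))-tn^2,
\]
because $\varepsilon+\frac32\rho\le\varepsilon+\varepsilon/2\le t$ when $\varepsilon\le\min\{1/16,t/2\}$ (indeed $\frac32\rho=\frac3{16}\varepsilon^3\le\varepsilon/2$ since $\varepsilon\le1/16$).

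Take $n_0$ to be the larger of the far-cut threshold for $(\gamma,\rho,t)$ and $64\varepsilon^{-3}$; both depend only on $\gamma$ and $t$.

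The only point needing care is that the near-cut bound is stated as an absolute quantity, $1/8$, while the target is relative to $P(n,e)$. This works because the upper bound on $e(G)$ in this regime comes from the small deficit together with the trivial bound of $n^2/4$ on any cut. The minimum-degree hypothesis is used only in the far-cut branch.
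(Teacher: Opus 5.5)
Your proposal is correct and matches the paper's proof: the same split on $b(G)\ge\rho n^2$ between Proposition~\ref{prop:farcut} and Lemma~\ref{lem:nearcut}, and the same comparison $P(n,e(G))\le(1/8+3\rho/2)n^2$ in the near-cut case. Your explicit choice $\varepsilon=\min\{1/16,t/2\}$, $\rho=\varepsilon^3/8$ is one admissible instance of the paper's constraints $\rho\le\varepsilon_0^3/8$ and $\varepsilon_0+3\rho/2\le t$.
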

\begin{proof}
Choose $0<\varepsilon_0\le1/16$ and $\rho>0$ with
\[
 \rho\le\varepsilon_0^3/8,\qquad
 \varepsilon_0+3\rho/2\le t.
\]
If $b(G)\ge\rho n^2$, apply Proposition~\ref{prop:farcut}. Otherwise, eligibility gives a maximum cut of size at least $(1/4-\rho)n^2$, while
\[
 e(G)\le(1/4+\rho)n^2,\qquad
 P(n,e(G))\le(1/8+3\rho/2)n^2.
\]
Lemma~\ref{lem:nearcut} then yields
\[
 r_7(G)\ge(1/8-\varepsilon_0)n^2
 \ge P(n,e(G))-(\varepsilon_0+3\rho/2)n^2.
\]
Take $n_0$ large enough for both cases. The constants depend only on $\gamma$ and $t$.
\end{proof}

\section{Peeling and the upper bound}\label{sec:completion}
\subsection{Preserving the exact threshold}
We now remove the minimum-degree hypothesis without giving up the surplus of one edge.

\begin{lemma}[Fixed-margin peeling]\label{lem:peeling}
Let $n\ge64$ and $0<\gamma\le1/48$. Suppose $G$ has exactly $\lfloor n^2/4\rfloor+1$ edges. Repeatedly delete a vertex of current degree less than $(1/3+\gamma)N+1$, where $N$ is the current order. The process stops at an induced subgraph $H$ of order $h>n/3$ such that
\begin{gather}
 e(H)>h^2/4,\qquad \delta(H)\ge(1/3+\gamma)h+1,\label{eq:peelterminal}\\
 P(h,e(H))\ge\frac{n^2}{8}
 -\frac{3\gamma}{4}n(n+1)-\frac{7n}{4}.\label{eq:peelpotential}
\end{gather}
\end{lemma}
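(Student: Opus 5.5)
We track the surplus $s_N=e(G_N)-N^2/4$ along the peeling process, where $G_N$ denotes the current graph on $N$ vertices. Initially $s_n=\lfloor n^2/4\rfloor+1-n^2/4\ge 3/4$. When we delete a vertex of degree $d<(1/3+\gamma)N+1$, the surplus changes by
\[
 s_{N-1}-s_N=-d+\frac{N^2-(N-1)^2}{4}=\frac{2N-1}{4}-d>\frac{N}{2}-\frac14-\frac N3-\gamma N-1
 =\Bigl(\frac16-\gamma\Bigr)N-\frac54 .
\]
This is positive once $N$ exceeds a small absolute constant, because $\gamma\le1/48$. So the surplus never decreases in the relevant range, and it grows by roughly $N/8$ per step.

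\textbf{Step 1: the process stops at $h>n/3$.} Summing the increments from $n$ down to $N$ gives
\[
 s_N\ge \tfrac34+\sum_{j=N+1}^{n}\Bigl((1/6-\gamma)j-5/4\Bigr).
\]
If $N\le n/3$, this is at least $c\,n^2$ with $c$ close to $(1/6-\gamma)(1-1/9)/2$. On the other hand $s_N\le\binom N2-N^2/4<N^2/4\le n^2/36$. Since $(1/6-1/48)\cdot(8/9)/2\approx 0.065>1/36$, these two bounds contradict each other for $n\ge64$, with the linear terms absorbed. The small-$N$ range is therefore never reached, and the process stops at some $h>n/3$. At that point $e(H)-h^2/4=s_h\ge 3/4>0$, and the stopping rule gives $\delta(H)\ge(1/3+\gamma)h+1$. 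This proves \eqref{eq:peelterminal}. We still need to check that every increment is nonnegative down to $N=h$; this holds because $h>n/3\ge 21$ and $(1/6-\gamma)\cdot21>5/4$.

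\textbf{Step 2: the potential bound \eqref{eq:peelpotential}.} We have $P(h,e(H))=\frac32 e(H)-h^2/4=\frac32 s_h+h^2/8$. Write $k=n-h$ for the number of deleted vertices. Then $e(H)=e(G)-\sum d_i$, where the $i$-th deleted vertex has degree $d_i<(1/3+\gamma)N_i+1$ and $N_i$ runs over $n,n-1,\dots,h+1$. This gives
\[
 P(h,e(H))\ge \tfrac32\Bigl(\tfrac{n^2}{4}-\sum_{N=h+1}^{n}\bigl((1/3+\gamma)N+1\bigr)\Bigr)-\tfrac{h^2}{4}.
\]
The sum equals $(1/3+\gamma)(n^2-h^2+n-h)/2+(n-h)$. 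The $1/3$ part contributes $-\frac14(n^2-h^2)$ after multiplying by $3/2$. Combining,
\[
 P(h,e(H))\ge \tfrac{3n^2}{8}-\tfrac{n^2-h^2}{4}-\tfrac{h^2}{4}-\tfrac{3\gamma}{4}(n^2-h^2+n-h)-\tfrac{n-h}{4}-\tfrac32(n-h).
\]
The first three terms collapse exactly to $n^2/8$. The $\gamma$ term is at most $\frac{3\gamma}{4}n(n+1)$, and the linear terms are at most $\frac74 n$. This is precisely \eqref{eq:peelpotential}; the constant $7/4=1/4+3/2$ confirms that this is the intended accounting.

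\textbf{Main obstacle.} The delicate point is Step 1: we must show rigorously that the process cannot drive $N$ below $n/3$, and that the surplus stays positive throughout, using only $n\ge64$ and $\gamma\le1/48$. We handle this with the explicit quadratic comparison above, being careful with the $+1$ in the degree threshold and with the parity term in $\lfloor n^2/4\rfloor$. A simple way to organize it is to observe that the increments are positive for all $N\ge 8$, so $s_N\ge3/4$ holds as long as the process runs. The contradiction then comes from comparing $s_N$ against the trivial upper bound $N^2/4$ at the first moment $N\le n/3$.
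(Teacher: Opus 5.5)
Your proof is correct and is essentially the paper's argument: both track the per-step changes of the surplus $e-N^2/4$ and of the potential $P$ and compare them with the trivial edge bound. The only variation is that you exclude $h\le n/3$ using the quadratic growth of the surplus against $s_N<N^2/4$, whereas the paper first proves the potential bound for every initial segment and compares it with $P(h,e(H))\le h^2/2$. There is also one harmless slip in your closing remark: the increment bound $(1/6-\gamma)N-5/4$ is guaranteed positive only for $N\ge 9$, not $N\ge 8$ (at $\gamma=1/48$, $N=8$ it equals $-1/12$), but your Step~1 only uses it for $N\ge 22$.
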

\begin{proof}
Deleting a vertex of degree $d<(1/3+\gamma)N+1$ changes the potential by
\begin{equation}\label{eq:peelchange}
 P(N-1,m-d)-P(N,m)
 =\frac{2N-1}{4}-\frac32d
 >-\frac{3\gamma}{2}N-\frac74.
\end{equation}
The initial potential exceeds $n^2/8$. Summing~\eqref{eq:peelchange} and using $\sum N\le n(n+1)/2$ proves~\eqref{eq:peelpotential} for every initial segment of the deletion process.

If the order first reached some $h\le n/3$, the trivial bound $e(H)\le h^2/2$ would give
\[
 P(h,e(H))\le h^2/2\le n^2/18.
\]
On the other hand,~\eqref{eq:peelpotential} and $\gamma\le1/48$ give
\[
 P(h,e(H))\ge\frac{7}{64}n^2-\frac{113}{64}n>n^2/18
 \quad(n\ge64),
\]
a contradiction. Therefore every deletion is made at $N>n/3$, and the process stops at an order above $n/3$.

At each such deletion the surplus $m-N^2/4$ increases, since
\begin{align*}
 \bigl[(m-d)-(N-1)^2/4\bigr]-\bigl[m-N^2/4\bigr]
 &=\frac{2N-1}{4}-d\\
 &>\left(\frac16-\gamma\right)N-\frac54>0.
\end{align*}
The last inequality follows from $n\ge64$ and $\gamma\le1/48$. The initial surplus is positive, so $e(H)>h^2/4$. The stopping rule gives the asserted minimum degree.
\end{proof}

\begin{proof}[Lower bound in Theorem~\ref{thm:main}]
It suffices to consider $0<\varepsilon\le1/16$. Fix an eligible graph $G$ on $n$ vertices and a valid coloring. First keep exactly $\lfloor n^2/4\rfloor+1$ edges and restrict the coloring to them. Apply Lemma~\ref{lem:peeling} with $\gamma=\varepsilon/12$. The terminal graph $H$ has $h>n/3$, so for sufficiently large $n$ Proposition~\ref{prop:terminal} applies to it with $t=\varepsilon/2$. Since restriction cannot increase the number of colors,
\begin{align*}
 r_7(G)&\ge r_7(H)
 \ge P(h,e(H))-\frac{\varepsilon}{2}h^2\\
 &\ge\frac{n^2}{8}
 -\frac{\varepsilon}{16}n(n+1)-\frac{7n}{4}
 -\frac{\varepsilon}{2}n^2
 \ge\left(\frac18-\varepsilon\right)n^2
\end{align*}
for all sufficiently large $n$. The threshold on $n$ depends only on $\varepsilon$, so the bound holds for all eligible graphs and all valid colorings.
\end{proof}

\subsection{The two-clique upper bound}
The two-clique construction behind the conjecture gives the matching upper bound~\cite{BEGS1989,BCM2026}.

\begin{proof}[Upper bound in Theorem~\ref{thm:main}]
Put
\[
 a=\left\lceil\frac n2+\sqrt n\right\rceil,\qquad b=n-a,
\]
and take $G=K_a\sqcup K_b$. For sufficiently large $n$ these are nonnegative integers with $a\ge b$, and
\begin{align*}
 e(G)&=\binom a2+\binom b2
 =\frac{n^2}{4}+\left(a-\frac n2\right)^2-\frac n2\\
 &\ge\frac{n^2}{4}+\frac n2
 \ge\left\lfloor\frac{n^2}{4}\right\rfloor+1.
\end{align*}
Give the edges of each clique distinct colors, reusing colors of the larger clique on the smaller one. Every cycle lies in a single component and is therefore rainbow. The number of colors is at most
\[
 \binom a2=\frac{n^2}{8}+O(n^{3/2})
 =\left(\frac18+o(1)\right)n^2.
\]
Together with the lower bound, this proves Theorem~\ref{thm:main}, and hence Corollary~\ref{cor:allodd}.
\end{proof}

\section{Verification and formalization}\label{sec:verification}
The only computer-assisted step in the proof of Theorem~\ref{thm:main} is Lemma~\ref{lem:certificate}. Everything else, including the interpretation of the certificate and the passage to arbitrary graphs, is proved in the preceding sections. Separately, the whole of Corollary~\ref{cor:allodd} has been formalized in Lean~4.

\subsection{The rational certificate}
The certificate was found by semidefinite optimization and then reconstructed over the rationals. The independent verifier enumerates the admissible graphs and flags from scratch, forms the Gram matrices from their rational factorizations, and checks all $1436$ coefficient identities~\eqref{eq:rowidentity}. Exact elimination gives $456$ positive pivots in the reduced matrices. The verifier also confirms that the multipliers and slacks are nonnegative and that $\max_\theta\alpha_\theta=383936867/10^9<1$, the bound used in Theorem~\ref{thm:stable}. Together with the averaging argument of Section~\ref{sec:certificate}, these finite checks prove Theorem~\ref{thm:polynomial}. The verifier and the certificate data are in the directory \code{certificate} of the accompanying repository \url{https://github.com/Asad-Shahab/erdos-809-lean}, and Appendix~\ref{app:data} describes the data further.

\subsection{The Lean development}
The formalization is written in Lean~4.28.0~\cite{Lean4} against mathlib~v4.28.0~\cite{Mathlib}, and is contained in the same repository; the description below refers to commit \code{7ec4aaa}. Its three main theorems are the following.
\begin{description}[leftmargin=1.5em,itemsep=3pt]
\item[\code{Erdos809.erdos_809_C7}] Theorem~\ref{thm:main}. Its proof includes the finite certificate and its interpretation, the reduction from colorings to the palette bound, and the upper bound. It is stated in \code{Erdos809/Main.lean}.
\item[\code{Erdos809.erdos_809_long_odd_cycles}] The case $k\ge4$ of Corollary~\ref{cor:allodd}, following~\cite[Sections~3--4]{BCM2026}. The mathematics of this part is due to Buci\'c, Chen, and Ma; the contribution here is its formalization. It is stated in \code{Erdos809/LongOddCycles/Main.lean}.
\item[\code{Erdos809.erdos_809}] Corollary~\ref{cor:allodd}, deduced from the two previous theorems by separating the case $k=3$ from $k\ge4$. It is stated in \code{Erdos809/OddCycles.lean}.
\end{description}

The last theorem states that for every integer $k\ge3$ and real $\varepsilon>0$ there is $N$ such that, for every integer $n\ge N$, the minimum $q$ in~\eqref{eq:fdefinition} with $H=C_{2k+1}$ and $e=\lfloor n^2/4\rfloor+1$ is attained and satisfies $(1/8-\varepsilon)n^2\le q\le(1/8+\varepsilon)n^2$. Here $N$ may depend on $k$ and $\varepsilon$. A copy of the cycle is an injective adjacency-preserving map from $C_{2k+1}$ into the host graph, so copies are simple cycles that need not be induced, and the number of colors of a coloring is the size of its image. The minimum ranges over all graphs with at least $e$ edges and all valid colorings; a separate lemma shows that it is also attained by a graph with exactly $e$ edges.

In the $C_7$ part, the certificate appears as explicit data in the directory \code{Erdos809/Certificate}, scaled to integers by the common denominator given in Appendix~\ref{app:data}. These data comprise the enumeration of admissible five-vertex graphs with their orbit representatives, the fifteen Gram blocks together with integer witnesses of their positive semidefiniteness, and the coefficient identities. All of these finite statements are checked by the Lean kernel. The weighted and asymptotic bounds are then derived from them as in Sections~\ref{sec:certificate}--\ref{sec:completion}, with the regularity lemma and the triangle removal lemma taken from mathlib. The final theorem depends only on the standard axioms \code{propext}, \code{Classical.choice}, and \code{Quot.sound}; in particular, no step relies on compiled code or on an external computation.

The formalization differs from the exposition above in two inessential respects. It extracts a triangular matching from each exact cover, and its two-clique construction uses clique sizes differing by an $\varepsilon$-dependent linear amount. These variants give the same palette lower bound and the same asymptotic upper bound as the arguments presented here.

\subsection*{Acknowledgements}
The research for this paper made extensive use of AI systems, principally agents built on OpenAI's Codex and Anthropic's Claude. They were used in the search for the proof, the construction of the rational certificate, the Lean formalization, and the preparation of the manuscript. One finite counting lemma in the formalization of the Buci\'c--Chen--Ma argument was proved with the help of Harmonic's Aristotle. The results stated in Theorem~\ref{thm:main} and Corollary~\ref{cor:allodd} are checked by the Lean development described in Section~\ref{sec:verification}. The author is responsible for the content of the paper.

\clearpage
\appendix
\section{The rational certificate data}\label{app:data}
The certificate of Lemma~\ref{lem:certificate} consists of representatives of the admissible five-position graphs, the flag and attachment lists, fifteen factorizations $G_i=V_iR_iV_i^{\mathsf T}$, four families of scalar multipliers, and one slack for each representative. The matrices $V_i$ have integer entries, and the entries of $R_i$, the multipliers, and the slacks are rational.

Canonical representatives are defined as follows. A colored graph on $s$ ordered positions is encoded by its $\binom{s}{2}$ colors in lexicographic order of pairs, and we take the lexicographically least encoding among the permitted relabelings: all permutations for unrooted types, permutations fixing position~0 for one-root flags, and permutations fixing positions~0 and~1 individually for ordered-edge-rooted flags. Attachment vectors keep the fixed order of their three roots. These conventions determine the indices in~\eqref{eq:gramcolumn}--\eqref{eq:unioncolumn}.

In the order used by the data, the full Gram dimensions are
\[
 28,64,28,44,44,19,23,23,35,35,35,30,30,30,30,
\]
and the dimensions of the reduced positive definite matrices are
\[
 20,55,28,44,40,14,18,23,30,35,34,29,30,30,26,
\]
which sum to $456$. Among the $1436$ coefficient slacks, $39$ are zero and the smallest positive slack is $2835633/500000000$. The common scale $7461504000000000$ clears the denominators of the Gram matrices and scalar multipliers, as well as the divisions by three in the degree and union terms. Matrix products sum over ordered pairs of flags, as in $p^{\mathsf T}G_ip$.

The verifier reconstructs the admissible graphs and their orbit partition, the flag indices, and every coefficient in~\eqref{eq:rowidentity}. It checks positive definiteness of each $R_i$ by rational elimination, substitutes the resulting $G_i$ into the identity, and checks the signs of the multipliers and slacks and the exact value of $a_*$. The same data, scaled to integers, form part of the Lean development described in Section~\ref{sec:verification}.

\clearpage
\bibliographystyle{amsplain}
\bibliography{references}
\end{document}